\documentclass[12pt]{article}

\usepackage{graphicx}
\usepackage{amssymb}
\usepackage{amsthm}
\usepackage{amsmath}
\usepackage{euscript}
\usepackage{color}
\usepackage{tikz-cd}
\usepackage{stmaryrd}
\usepackage{fullpage}
\usepackage[colors]{optsys}

\newcommand{\preference}{\precsim}
\newcommand{\wstrategy}{\policy}
\newcommand{\WSTRATEGY}{\POLICY}

\newcommand{\SetValuedReducedSolutionMap}{{\cal M}}
\newcommand{\ReducedSolutionMap}{M}
\newcommand{\bgent}{b}

\newcommand{\BGENT}{\mathbb{B}}

\renewcommand{\cardinal}[1]{\left| #1 \right|}
\renewcommand{\range}[1]{\|#1\|}

\newcommand{\LastElement}[1]{{#1}^\star}
\newcommand{\FirstElements}[1]{{#1}^-}

\newcommand{\Choice}{C}

\newcommand{\atom}{G}

\renewcommand{\AGENT}{{\mathbb A}}
\newcommand{\cut}{\psi}
\newcommand{\ORDER}{\Sigma}
\newcommand{\ordering}{\varphi}
\newcommand{\totalordering}{\rho}

\newcommand{\numAGENT}{| \AGENT |}

\newcommand{\SET}{{\mathbb D}}
\newcommand{\Set}{D}

\newcommand{\quotient}[2]{\raise1ex\hbox{$#1$}\Big/\lower1ex\hbox{$#2$}}

\newcommand{\BehavioralWStrategy}{\beta}

\newcommand{\MixedStrategy}{\mu}
\newcommand{\ProductMixedStrategy}{\pi}
\newcommand{\probability}{\nu}
\newcommand{\conditionaly}[2]{\np{\na{#1}\,\vert\,{#2}}}
\newcommand{\conditionalySet}[2]{\np{{#1}\,\vert\,{#2}}}

\newcommand{\PMtoB}[1]{\hat{#1}}
\newcommand{\BtoPM}[1]{\check{#1}}
\newcommand{\MtoB}[1]{\tilde{#1}}

\renewcommand{\defset}[3][]{\ba{#2\,;\, #3}}
\renewcommand{\crochet}[1]{\left< #1 \right>}
\renewcommand{\bracket}[1]{\left[ #1 \right]} 
\renewcommand{\sequence}[2]{\left(#1\right)_{#2}}           

\newcommand{\ProductBehavioral}{\ProductMixedStrategy}

\renewcommand{\borel}[1]{\tribu{B}_{#1}}



\newenvironment{keyword}{\bgroup
  \hsize=\textwidth%
  \noindent\unskip\textbf{Keywords.}\noindent\,\ignorespaces}
 {\egroup}
\title{Kuhn's Equivalence Theorem\\ for Games in Intrinsic Form}
\author{Benjamin Heymann\footnote{Criteo, Paris, France}, 
  Michel De Lara\footnote{CERMICS, Ecole des Ponts, Marne-la-Vall\'ee, France},
  Jean-Philippe Chancelier$^\dagger$}
\date{\today}

\begin{document}

\maketitle


\begin{abstract}
   We state and prove Kuhn's equivalence theorem
  for a new representation of  games, the intrinsic form.
First, we introduce games in intrinsic form where 
information is represented by $\sigma$-fields over a product set.
For this purpose, we adapt to  games the intrinsic representation
 that Witsenhausen  introduced in control theory.
Those intrinsic games do not require an explicit description of the play
 temporality, as opposed to extensive form
 games on trees.   
Second, we prove, for this new and more general representation of games, 
that behavioral and mixed strategies are equivalent under perfect recall
(Kuhn's theorem).
As the intrinsic form replaces the tree structure with a product structure, 
the handling of information is easier.
This makes the intrinsic form a new valuable tool for the analysis of
games with information.
\end{abstract}

\begin{keyword}
  Games with information, Kuhn's equivalence theorem,  Witsenhausen intrinsic model.
\end{keyword}


\section{Introduction}

From the origin, games in extensive form have been formulated on a tree.
In his seminal paper
\emph{Extensive Games and the Problem of Information}~\cite{Kuhn:1953},
Kuhn claimed that ``The use of a geometrical model  (\ldots)
clarifies the delicate problem of information''. This tells us that  the proper handling of information was a strong 
motivation for Kuhn's extensive games. 
On the game tree, moves are those vertices that possess alternatives,
then moves are partitioned into players moves, themselves partitioned into 
information sets (with the constraint that no two moves in an 
information set can be on the same play).
Kuhn mentions agents, one agent per information set, 
to ``personalize the interpretation'' but the notion is not central
(to the point that his definition of perfect recall
``obviates the use of agents'').

By contrast, in the so-called Witsenhausen's intrinsic model
\cite{Witsenhausen:1971a,Witsenhausen:1975},
agents play a central role. 
Each agent is equipped with a decision set and a $\sigma$-field,
and the same for Nature. Then, Witsenhausen introduces the product set
and the product $\sigma$-field. This product set hosts the agents' information subfields. 
The Witsenhausen's intrinsic model was elaborated 
in the control theory setting, in order to handle how information
is distributed among agents 
and how it impacts their strategies.
Although not explicitly designed for games, 
Witsenhausen's intrinsic model had, from the start, the potential 
to be adapted to games. Indeed, in~\cite{Witsenhausen:1971a} Witsenhausen 
 places his own model in the context of game theory by referring to 
von~Neuman and Morgenstern~\cite{vonNeuman-Morgenstern:1947},
Kuhn~\cite{Kuhn:1953} and Aumann~\cite{Aumann:1964}.
\medskip

In this paper, we introduce a new representation of games
that we call \emph{games in intrinsic form}. 
Game representations play a key role in their analysis
(see the illuminating introduction of the book~\cite{Alos-Ferrer-Ritzberger:2016}),
and we claim that games in intrinsic form display appealing features.
In the philosophy of the tree-based extensive form (Kuhn's view), 
the temporal ordering is 
hardcoded in the tree structure: one goes from the root to the leaves,
making decisions at the moves, contingent on information, chance and strategies.
For Kuhn, the time arrow (tree) comes first; information comes second
(partition of the move vertices).
By contrast, for Witsenhausen, information comes first;
the time arrow comes (possibly) second, 
under a proper causality assumption contingent to the information structure. 

Not having a hardcoded temporal ordering makes mathematical representations
less constrained, hence more general.
Moreover, Witsenhausen's framework makes representations more intrinsic.
As an illustration, let us consider a game where two players play once but at the same
time. To formulate it on a tree requires to arbitrarily decide which of the
two plays first. This is not the case for games in intrinsic form,
where each player/agent is equipped with an information subfield
and strategies that are measurable with respect to the latter;
writing the system of two equations that express decisions as 
the output of strategies leads to a unique outcome,
without having to solve one equation first, and the other second.

The tree representation of games has its pros and cons.
On the one hand, trees are perfect to follow step by step how a game is played
as any strategy profile induces a unique play:
one goes from the root to the leaves, passing from one node
to the next by an edge that depends on the strategy profile.
On the other hand, in games with information,
  information sets are represented as ``union'' of tree nodes
that must satisfy restrictive axioms, and such
  unions do not comply in a natural way with the tree structure,
  which can render the game analysis delicate
  \cite{Alos-Ferrer-Ritzberger:2016,Bonanno:2004,Brandenburger:2007}.
By contrast, the notion of Witsenhausen's intrinsic games (W-games) does not require
an explicit description of the play temporality, and the intrinsic form replaces
the tree structure with a product structure, more amenable to mathematical analysis. 
If the introduction of the model may seem involved, we argue that the
resulting structure is a powerful mathematical tool, because there are many
situations in which it is easier to reason and discuss with mathematical
formulas than with trees.

We illustrate our claim with a proof of the celebrated Kuhn's equivalence theorem for games
in intrinsic form.
Indeed, as a first step in a broader research program, 
 we show that equivalence between mixed and behavioral
 strategies holds under perfect recall for W-games.
More precisely, our proof relies on an equivalence between behavioral,
mixed and a new notion of product-mixed strategies.
These latter form a subclass of mixed strategies.
In the spirit of~\cite{Aumann:1964}, 
in a product-mixed strategy, each agent (corresponding to time index in~\cite{Aumann:1964}) 
generates strategies from a random device that is independent of all the
other agents. We prove that, under perfect recall for W-games,
any mixed strategy of a player is not only equivalent to a behavioral strategy,
but also to a product-mixed strategy where all the agents under control 
of the player randomly select their pure strategy independently of the other agents.
\medskip

The paper is organized as follows.
In Sect.~\ref{Witsenhausen_intrinsic_model},
we present the finite version of Witsenhausen's intrinsic model. 
Then, in Sect.~\ref{Finite_games_in_intrinsic_form}, we propose a formal
definition of games in intrinsic form (W-games), and then discuss three notions of 
``randomization'' of pure strategies --- mixed, product-mixed and behavioral.
Finally, we derive an equivalent of Kuhn's equivalence theorem for games in intrinsic form
in Sect.~\ref{Kuhn_Theorem}.
In Appendix~\ref{Background_on_fields_atoms_and_partitions}, 
we present background material on fields, atoms and partitions,
as these notions lay at the core of Witsenhausen's intrinsic model
in the finite case.
In all the paper, we adopt the convention that a player is female (hence using ``she'' and
``her''), whereas an agent is male (``he'', ``his'').

\section{Witsenhausen's intrinsic model (the finite case)}
\label{Witsenhausen_intrinsic_model}

In this paper, we tackle the issue of information in the context of finite games.
For this purpose, we will present the so-called intrinsic model of Witsenhausen
\cite{Witsenhausen:1975,Carpentier-Chancelier-Cohen-DeLara:2015}
but with finite sets rather than with infinite ones 
as in the original exposition.
We refer the reader to 
Appendix~\ref{Background_on_fields_atoms_and_partitions}
for background material on fields, atoms and partitions.

In~\S\ref{Finite_Witsenhausen_intrinsic_model},
we present the finite version of Witsenhausen's 
intrinsic model, where we highlight the role of the configuration field
that contains the information subfields of all agents.
In~\S\ref{Examples}, we illustrate, on a few examples, the ease with which 
one can model information in strategic contexts, 
using subfields of the configuration field.
Finally, we present in~\S\ref{Solvability_Causality} 
the notions of solvability and causality. 

\subsection{Finite Witsenhausen's intrinsic model (W-model)}
\label{Finite_Witsenhausen_intrinsic_model}

We present the finite version of Witsenhausen's 
intrinsic model, introduced some five decades ago in the control community
\cite{Witsenhausen:1971a,Witsenhausen:1975}. 

\begin{definition}(adapted from \cite{Witsenhausen:1971a,Witsenhausen:1975})

A \emph{finite W-model} is a collection
$\bp{
\AGENT,
\np{\Omega, \tribu{\NatureField}}, 
\sequence{\CONTROL_{\agent}, \tribu{\Control}_{\agent}}{\agent \in \AGENT}, 
\sequence{\tribu{\Information}_{\agent}}{\agent \in \AGENT} 
}$, where 
\begin{itemize}
\item
$\AGENT$ is a finite set, whose elements are called \emph{agents};
\item 
\( \Omega \) is a finite set which represents all uncertainties;
any $\omega \in \Omega$ is called a \emph{state of Nature};
$\tribu{\NatureField}$ is the complete field over~\( \Omega \);
\item 
for any \( \agent \in \AGENT \), $\CONTROL_{\agent}$ is a finite set, 
the \emph{set of decisions} for agent~$\agent$;
$\tribu{\Control}_{\agent}$ is the complete field over~$\CONTROL_{\agent}$;
\item
for any \( \agent \in \AGENT \), \( \tribu{\Information}_{\agent} \)
is a subfield of the following product field
\begin{equation}
  \tribu{\Information}_{\agent} \subset 
  {\oproduit{\bigotimes \limits_{\bgent \in \AGENT} \tribu{\Control}_{\bgent}}{\tribu{\NatureField}}}
  \eqsepv
  \forall \agent \in \AGENT 
  \label{eq:information_field_agent}
\end{equation}

and is called the \emph{information field} of the agent~$\agent$.
\end{itemize}
\label{de:W-model}
\end{definition}

\begin{subequations}
The \emph{configuration space} is the product space 
(called \emph{hybrid space} by Witsenhausen, hence the $\HISTORY$ notation)
\begin{equation}
  \label{eq:HISTORY}
  \HISTORY = \produit{ \prod\limits_{\agent \in \AGENT} \CONTROL_{\agent}}{\Omega} 
  \eqfinp 
\end{equation}
As all fields \( \tribu{\NatureField} \) and
\( \sequence{\tribu{\Control}_{\agent}}{\agent \in \AGENT} \) are complete, 
the product \emph{configuration field}
\begin{equation}
  \tribu{\History} =\oproduit{{\bigotimes \limits_{\agent \in \AGENT} \tribu{\Control}_{\agent}}}{\tribu{\NatureField}}
  \label{eq:history_field}
\end{equation}
is also the complete field of~\( \HISTORY \).
A \emph{configuration} \( \history \in \HISTORY \) is denoted by
\begin{equation}
  \history=\bp{\omega, \sequence{\control_{\agent}}{\agent \in \AGENT}}
  \iff
  \history_\emptyset = \omega
 \text{ and }
  \history_{\agent} =\control_{\agent}
  \eqsepv
  \forall \agent \in \AGENT
  \eqfinp
  \label{eq:history}
\end{equation}
\end{subequations}

In lieu of the information field~\( \tribu{\Information}_{\agent} \)
in~\eqref{eq:information_field_agent},
it will be convenient to consider the equivalence relation~$\sim_{\agent}$,
on the configuration space~$\HISTORY$, defined in such a way that 
the equivalence classes \( \bracket{\cdot}_{\agent} \subset \HISTORY \) 
coincide with the atoms of~\( \tribu{\Information}_{\agent} \),
that is, with the elements of the partition
\( \crochet{\tribu{\Information}_{\agent}} \) in~\eqref{eq:atom_set}:
\begin{equation}
  \bp{\forall \history', \history'' \in \HISTORY} \quad
  \history' \sim_{\agent} \history'' \;\Leftrightarrow\; 
  \history'' \in \bracket{\history'}_{\agent}\;\Leftrightarrow\; 
  \exists \atom \in \crochet{\tribu{\Information}_{\agent}},
  \, \{ \history', \history'' \} \subset \atom
  \eqfinp
  \label{eq:bracket_HISTORY}
\end{equation}
Thus defined, the subset \( \bracket{\history}_{\agent} \subset \HISTORY \)
is the unique atom~$\atom$ in \( \crochet{\tribu{\Information}_{\agent}} 
\subset \tribu{\Information}_{\agent} \)
that contains the configuration~$\history$.

We will need the following equivalent characterization of measurable
mappings, which is a slight reformulation of~\cite[Proposition~3.35]{Carpentier-Chancelier-Cohen-DeLara:2015}.

\begin{proposition}(adapted from \cite[Proposition~3.35]{Carpentier-Chancelier-Cohen-DeLara:2015})
  \label{pr:measurability}
  Let  \( \rho : (\HISTORY,\tribu{\History}) \to (\SET,\tribu{\Set}) \) 
  be a mapping, 
  where $\SET$ is a set and $\tribu{\Set}$ is a $\sigma$-field over~$\SET$.
  We suppose that the $\sigma$-field~$\tribu{\Set}$ contains all the singletons.
  Then, for any agent~$\agent \in \AGENT$, the following statements are equivalent:
  \begin{subequations}
    \begin{equation}
      \rho^{-1} (\tribu{\Set}) \subset \tribu{\Information}_{\agent} 
      \eqfinv
    \end{equation}
    \begin{equation}
      \bp{ \forall \history', \history'' \in \HISTORY } \quad
      \history' \sim_{\agent} \history'' \implies 
      \rho\np{\history'}=\rho\np{\history''}
      \eqfinv
      \label{eq:strategy_atoms_rho}
    \end{equation}
    \begin{equation}
      \begin{split}
        \textrm{the set-valued mapping } \hat\rho :
        \crochet{\tribu{\Information}_{\agent}}\rightrightarrows\SET 
        \eqsepv \textrm{defined by} \\
        \hat\rho\np{\atom_{\agent}}= \nset{ \rho\np{\history}}%
        { \history \in \atom_{\agent}}
        \eqsepv
        \forall \atom_{\agent} \in \crochet{\tribu{\Information}_{\agent}},
        \textrm{ is a mapping.}
      \end{split}
      \label{eq:measurability-and-set-valued-map}
    \end{equation}
  \end{subequations}
  In any of these equivalent cases, we say that 
  the mapping~\( \rho \) is \emph{$\tribu{\Information}_{\agent}$-measurable},
  and, for all $\atom_{\agent}\in \crochet{\tribu{\Information}_{\agent}}$, we denote by \( \rho\np{\atom_{\agent}} \)
  the unique element of~$\SET$ in $\hat\rho\np{\atom_{\agent}}$, that is, 
  \begin{equation}
    \bp{ \forall r \in \rho\np{\HISTORY}
      \eqsepv \forall \atom_{\agent} \in \crochet{\tribu{\Information}_{\agent}} }\quad
    \rho\np{\atom_{\agent}} =r \iff
    \hat\rho\np{{\atom_{\agent}}} = \na{r}
    \eqfinp
    \label{eq:common_value_rho}
  \end{equation}
  Then, using the extended notation above~\eqref{eq:bracket_HISTORY}, we have the property 
  \begin{equation}
    \rho \text{ is }\tribu{\Information}_{\agent}\text{-measurable}
    \implies 
    \rho\np{ \bracket{\history}_{\agent} } =
    \rho\np{\history} 
    \eqsepv \forall \history \in \HISTORY
    \eqfinp
    \label{eq:common_value_equivalence_class_rho}
  \end{equation}
\end{proposition}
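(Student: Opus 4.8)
The plan is to prove the three assertions equivalent by showing that the inclusion $\rho^{-1}\np{\tribu{\Set}} \subset \tribu{\Information}_{\agent}$ implies~\eqref{eq:strategy_atoms_rho}, that~\eqref{eq:strategy_atoms_rho} implies~\eqref{eq:measurability-and-set-valued-map}, and that~\eqref{eq:measurability-and-set-valued-map} implies back the inclusion, and then to read off~\eqref{eq:common_value_rho} and~\eqref{eq:common_value_equivalence_class_rho} as immediate corollaries. The one structural ingredient used at every step is the finite-case correspondence between subfields and partitions recalled in Appendix~\ref{Background_on_fields_atoms_and_partitions}: a subset of~$\HISTORY$ belongs to~$\tribu{\Information}_{\agent}$ if and only if it is a union of atoms of~$\tribu{\Information}_{\agent}$, equivalently --- by the definition~\eqref{eq:bracket_HISTORY} of~$\sim_{\agent}$ --- if and only if it is saturated for~$\sim_{\agent}$. (Since $\tribu{\History}$ is the complete field of~$\HISTORY$, every preimage $\rho^{-1}\np{B}$ automatically lies in~$\tribu{\History}$, so the only real question is when it lies in the strictly smaller field~$\tribu{\Information}_{\agent}$.)

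\textbf{First implication.} Assuming $\rho^{-1}\np{\tribu{\Set}} \subset \tribu{\Information}_{\agent}$, fix $\history', \history'' \in \HISTORY$ with $\history' \sim_{\agent} \history''$. Because $\tribu{\Set}$ contains every singleton, the set $\rho^{-1}\np{\na{\rho\np{\history'}}}$ belongs to $\rho^{-1}\np{\tribu{\Set}} \subset \tribu{\Information}_{\agent}$, hence is a union of atoms of~$\tribu{\Information}_{\agent}$; since it contains~$\history'$, it contains the whole atom $\bracket{\history'}_{\agent}$, and therefore~$\history''$, which gives $\rho\np{\history''} = \rho\np{\history'}$, that is, property~\eqref{eq:strategy_atoms_rho}. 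This is the only step that uses the hypothesis that $\tribu{\Set}$ contains all singletons.

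\textbf{Second and third implications.} For the second, fix an atom $\atom_{\agent} \in \crochet{\tribu{\Information}_{\agent}}$; by~\eqref{eq:bracket_HISTORY} any two configurations lying in~$\atom_{\agent}$ are $\sim_{\agent}$-equivalent, so~\eqref{eq:strategy_atoms_rho} forces $\rho$ to be constant on~$\atom_{\agent}$, and hence $\hat\rho\np{\atom_{\agent}}$ is a singleton --- which is exactly the assertion~\eqref{eq:measurability-and-set-valued-map} that $\hat\rho$ is a mapping. For the third, let $B \in \tribu{\Set}$: if $\history \in \rho^{-1}\np{B}$ and $\history' \sim_{\agent} \history$, then $\history'$ lies in the atom $\bracket{\history}_{\agent}$, and, $\hat\rho\np{\bracket{\history}_{\agent}}$ being a singleton, $\rho\np{\history'} = \rho\np{\history} \in B$, so $\history' \in \rho^{-1}\np{B}$. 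Thus $\rho^{-1}\np{B}$ is saturated for~$\sim_{\agent}$, hence a union of atoms of~$\tribu{\Information}_{\agent}$, hence an element of~$\tribu{\Information}_{\agent}$; as $B$ was arbitrary, $\rho^{-1}\np{\tribu{\Set}} \subset \tribu{\Information}_{\agent}$.

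Under any of the three now-equivalent conditions, \eqref{eq:measurability-and-set-valued-map} makes $\hat\rho$ single-valued, which is what legitimizes the notation $\rho\np{\atom_{\agent}}$ and makes~\eqref{eq:common_value_rho} hold by definition; and~\eqref{eq:common_value_equivalence_class_rho} then follows by specializing $\atom_{\agent} = \bracket{\history}_{\agent}$ and using $\history \in \bracket{\history}_{\agent}$, so that $\rho\np{\history} \in \hat\rho\np{\bracket{\history}_{\agent}} = \na{\rho\np{\bracket{\history}_{\agent}}}$. I do not anticipate any genuine obstacle: the only point requiring care is to invoke the finite field/partition dictionary in the correct direction at each stage --- ``an element of~$\tribu{\Information}_{\agent}$ is a union of atoms'' in the first implication, and ``a set saturated for~$\sim_{\agent}$ is an element of~$\tribu{\Information}_{\agent}$'' in the third --- together with the single use of the completeness of~$\tribu{\Set}$ on singletons in that same first implication.
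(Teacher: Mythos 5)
Your proof is correct. The paper itself does not spell out a proof of Proposition~\ref{pr:measurability} (it defers to the cited Proposition~3.35 of \cite{Carpentier-Chancelier-Cohen-DeLara:2015}), and your cycle of implications --- using the finite field/atom/partition dictionary, with the singleton hypothesis on~$\tribu{\Set}$ invoked exactly once to pass from $\rho^{-1}(\tribu{\Set})\subset\tribu{\Information}_{\agent}$ to~\eqref{eq:strategy_atoms_rho}, and~\eqref{eq:common_value_rho}--\eqref{eq:common_value_equivalence_class_rho} read off as immediate consequences --- is precisely the intended argument.
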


Now that we have explicited measurable mappings with respect to 
agents information subfields, we introduce the notion of 
pure W-strategy.

\begin{definition}(\cite{Witsenhausen:1971a,Witsenhausen:1975})
  \label{de:W-strategy}
  A \emph{pure W-strategy} of agent~$\agent \in \AGENT$ is a mapping
  \begin{subequations}
    \begin{equation}
      \wstrategy_{\agent} : (\HISTORY,\tribu{\History}) \to
      (\CONTROL_{\agent},\tribu{\Control}_{\agent})
    \end{equation}
    from configurations to decisions,
    which is measurable with respect to the information
    field~$\tribu{\Information}_{\agent}$ of agent~$\agent$, that is,
    \begin{equation}
      \label{eq:decision_rule}
      \wstrategy_{\agent}^{-1} (\tribu{\Control}_{\agent})
      \subset \tribu{\Information}_{\agent} 
      \eqfinp
    \end{equation}
  \end{subequations}
  \begin{subequations}
    We denote by $\WSTRATEGY_{\agent}$ the set of all pure W-strategies of agent $\agent \in \AGENT$.
    A \emph{pure W-strategies profile}~$\wstrategy$ is a family 
    \begin{equation}
      \wstrategy = \sequence{\wstrategy_{\agent}}{\agent \in \AGENT} 
      \in \prod_{\agent \in \AGENT} \WSTRATEGY_{\agent} 
      \label{eq:W-strategy_profile}
    \end{equation}
    of pure W-strategies, one per agent~$\agent \in \AGENT$.
    The \emph{set of pure W-strategies profiles} is 
    \begin{equation}
      \WSTRATEGY= \prod_{\agent \in \AGENT} \WSTRATEGY_{\agent} 
      \eqfinp
      \label{eq:W-STRATEGY} 
    \end{equation}
  \end{subequations}
\end{definition}
Condition~\eqref{eq:decision_rule} expresses the property that any 
(pure) W-strategy of agent~$\agent$ may only depend upon the
information~$\tribu{\Information}_{\agent}$ available to the agent. 

In what follows, we will need some notations.
For any nonempty subset $\BGENT \subset \AGENT$ of agents, we define 
\begin{subequations}
  \begin{align}
    \tribu{\Control}_\BGENT 
    &= 
      \bigotimes \limits_{\bgent \in \BGENT} \tribu{\Control}_\bgent
      \otimes
      \bigotimes \limits_{\agent \not\in \BGENT} \{ \emptyset, \CONTROL_{\agent} \}
      \subset
      \bigotimes \limits_{\agent \in \AGENT} \tribu{\Control}_{\agent}
      \eqfinv
      \label{eq:sub_control_field_BGENT}
    \\
    \tribu{\History}_\BGENT 
    &= 
      \tribu{\NatureField} \otimes \tribu{\Control}_\BGENT
      = \tribu{\NatureField} \otimes \bigotimes \limits_{\bgent \in \BGENT} \tribu{\Control}_\bgent
      \otimes
      \bigotimes \limits_{\agent \not\in \BGENT} \{ \emptyset, \CONTROL_{\agent} \}
      \subset \tribu{\History}
      \eqfinv
      \label{eq:sub_history_field_BGENT}
    \\
    \history_\BGENT 
    &=
      \sequence{\history_\bgent}{\bgent \in \BGENT}
      \in \prod \limits_{\bgent \in \BGENT} \CONTROL_\bgent
      \eqsepv \forall \history \in \HISTORY
      \eqfinv
      \label{eq:sub_history_BGENT}
    \\
    \wstrategy_\BGENT 
    &=
      \sequence{\wstrategy_\bgent}{\bgent \in \BGENT}
      \in \prod \limits_{\bgent \in \BGENT} \WSTRATEGY_{\bgent} 
      \eqsepv \forall \wstrategy \in \WSTRATEGY
      \eqfinp
      \label{eq:sub_wstrategy_BGENT}
  \end{align}
  %
\end{subequations}

\subsection{Examples}
\label{Examples}

We illustrate, on a few examples, the ease with which 
one can model information in strategic contexts, 
using subfields of the configuration field.
Even if we have presented the finite version of Witsenhausen's intrinsic model
in~\S\ref{Finite_Witsenhausen_intrinsic_model},
we take the opportunity here to show its potential to describe 
infinite decision and Nature sets.

\subsubsubsection{Sequential decisions}
Suppose an individual has to take decisions (say, an element of $\RR^n$) 
at every discrete time step in the set\footnote{For any integers $a \leq b$, $\ic{a,b}$ denotes the subset
  $\na{a,a+1,\ldots,b-1,b}$.} 
\( \ic{1,\horizon{-}1} \), where $\horizon \geq 1$ is an integer.
The situation will be modeled with (possibly) Nature set and field
\( \np{\Omega, \tribu{\NatureField}} \), 
and with $\horizon$~agents in $\AGENT=\ic{0,\horizon{-}1}$, 
and their corresponding sets, $\CONTROL_t= \RR^n$, and fields,
$\tribu{\Control}_t = \borel{\RR^{n}} $
(the Borel $\sigma$-field of~$\RR^n$), for $t \in \AGENT$.
Then, one builds up the product set 
$\HISTORY=\produit{\prod_{t=0}^{\horizon{-}1} \CONTROL_{t} }{\Omega}$ and 
the product field $\tribu{\History}= \oproduit{%
  \bigotimes_{t=0}^{\horizon{-}1} \tribu{\Control}_{t} }{\tribu{\NatureField}}$.
Every agent \( t \in \ic{0,\horizon{-}1} \) is equipped with an 
information field \( \tribu{\Information}_{t} \subset \tribu{\History} \).
Then, we show how we can express four information patterns:
sequentiality, memory of past information, 
memory of past actions, perfect recall.
The inclusions \( \tribu{\Information}_{t} \subset 
\tribu{\History}_{\{0,\ldots,t{-}1\}} = \oproduit{%
\bigotimes_{s=0}^{t-1} \tribu{\Control}_{s} \otimes 
\bigotimes_{s=t}^{\horizon{-}1} \{ \emptyset, \CONTROL_{s} \}}{\tribu{\NatureField}} \),
for \( t\in \ic{0,\horizon{-}1} \), 
express that every agent can remember no more than his past actions
(sequentiality); 
memory of past information is represented by the inclusions 
\(\tribu{\Information}_{t-1} \subset \tribu{\Information}_{t} \),
for \( t\in \ic{1,\horizon{-}1} \);
memory of past actions is represented by the inclusions 
\(  \oproduit{\bigotimes_{s=0}^{t-1} \tribu{\Control}_{s} \otimes 
\bigotimes_{s=t}^{\horizon{-}1} \{ \emptyset, \CONTROL_{s} \}}%
{ \{ \emptyset, \Omega \} } = 
\oproduit{ \tribu{\Control}_{\{0,\ldots,t{-}1\}}}%
{ \{ \emptyset, \Omega \} } 
\subset \tribu{\Information}_{t} \),
for \( t\in \ic{1,\horizon{-}1} \);
perfect recall is represented by the inclusions 
\( \tribu{\Information}_{t-1} \vee \bp{ \oproduit{ \tribu{\Control}_{\{0,\ldots,t{-}1\}}}%
{ \{ \emptyset, \Omega \} } }
\subset \tribu{\Information}_{t} \),
for \( t\in \ic{1,\horizon{-}1} \).

To represent $N$~players --- each~$\player$ of whom makes a sequence of decisions,
one for each period~$t \in \ic{0,\horizon_\player{-}1}$ --- 
we use $\prod_{\player=1}^N \horizon_\player$~agents, labelled by
\( (\player,t) \in \bigcup_{\player'=1}^N \bp{ \na{\player'} \mathord{\times} \ic{0,\horizon_{\player'}{-}1}} \).
With obvious notations, the inclusions 
\(\tribu{\Information}_{\np{\player,t-1}} \subset \tribu{\Information}_{\np{\player,t}} \)
express memory of one's own past information, 
whereas the inclusions 
\( \bigvee_{\player'=1}^N
\bp{\oproduit{\bigotimes_{s=0}^{t-1} \tribu{\Control}_{s} \otimes 
    \bigotimes_{s=t}^{\horizon_{\player'}-1} \{ \emptyset, \CONTROL_{s} \}}
  { \{ \emptyset, \Omega \} }}
\subset \tribu{\Information}_{\np{\player,t}} \),
express memory of all players past actions.

\subsubsubsection{Principal-Agent models} 

A branch of Economics studies so-called \emph{Prin\-ci\-pal-Agent} models with 
two decision makers (agents) --- 
  the Principal~$\Principal$ (\emph{leader}) who makes decisions $\control_{\Principal} \in \CONTROL_{\Principal}$,
  where the set~$\CONTROL_{\Principal}$ is equipped with a
  $\sigma$-field~$\tribu{\Control}_{\Principal}$,
  and the Agent~$\Agent$ (\emph{follower}) who makes decisions $\control_{\Agent} \in \CONTROL_{\Agent}$,
  where the set~$\CONTROL_{\Agent}$ is equipped with a
  $\sigma$-field~$\tribu{\Control}_{\Agent}$ ---
and with Nature, corresponding to \emph{private information (or type)} of the
Agent~$\Agent$, taking values in a set~$\Omega$, 
equipped with a $\sigma$-field~$\tribu{\NatureField}$.

\emph{Hidden type} (leading to adverse selection or to signaling) is represented by any information structure 
with the property that, on the one hand,
\begin{equation}
  \tribu{\Information}_{\Principal} \subset 
  \oproduit{ 
    \{\emptyset,\CONTROL_{\Principal}\} 
    \otimes 
    \underbrace{\tribu{\Control}_{\Agent} }_{\textrm{\makebox[0pt]{\hspace{1cm}$\Agent$'s action possibly observed}}}
  }%
  {\underbrace{ \{\emptyset,\Omega \}  }_{\textrm{\makebox[0pt]{$\Agent$ type not observed}}}}
  \eqfinv  
\end{equation}
that is, 
the Principal~$\Principal$ does not know the Agent~$\Agent$ type,
but can possibly observe the Agent~$\Agent$ action,
and, on the other hand, that
\begin{equation}
  \oproduit{ 
    \{\emptyset,\CONTROL_{\Principal}\} 
    \otimes
    \{\emptyset,\CONTROL_{\Agent}\} 
  }{%
    \underbrace{ \tribu{\NatureField} }_{ \textrm{\makebox[0pt]{known inner type}}}}
  \subset \tribu{\Information}_{\Agent}
  \eqfinv  
  \label{eq:Agent_Information_type}
\end{equation}
that is, the Agent~$\Agent$ knows the state of nature (his type).

\emph{Hidden action} (leading to moral hazard) is represented by any information structure 
with the property that, on the one hand, 
\begin{equation}
  \tribu{\Information}_{\Principal} \subset 
  \oproduit{ 
    \{\emptyset,\CONTROL_{\Principal}\} 
    \otimes 
    \underbrace{ \{\emptyset,\CONTROL_{\Agent}\} }_{\textrm{\makebox[0pt]{\hspace{2cm}cannot observe $\Agent$'s action}}}
  }{%
    \underbrace{ \tribu{\NatureField} }_{ \textrm{\makebox[0pt]{\hspace{1cm}possibly knows $\Agent$ type} } }}
  \eqfinv  
\end{equation}
that is, the Principal~$\Principal$ does not know the Agent~$\Agent$ action,
but can possibly observe the Agent~$\Agent$ type
and, on the other hand, that
the inclusion~\eqref{eq:Agent_Information_type} holds true, 
that is, the agent~$\Agent$ knows the state of nature (his type).

\subsubsubsection{Stackelberg leadership model}
In Stackelberg games, the leader~$\Principal$ makes a decision
\( \control_{\Principal} \in \CONTROL_{\Principal} \)
--- based at most upon the partial observation of the state 
\( \omega \in \Omega \) of Nature ---
and the the follower~$\Agent$ makes a decision
\( \control_{\Agent} \in \CONTROL_{\Agent} \)
--- based at most upon the partial observation of the state of Nature 
\( \omega \in \Omega \), and upon the leader decision
\( \control_{\Principal} \in \CONTROL_{\Principal} \).
This kind of information structure is expressed with the following inclusions 
of fields:
\begin{equation}
\tribu{\Information}_{\Principal} 
\subset 
\oproduit{%
\{\emptyset,\CONTROL_{\Principal}\}
\otimes
\{\emptyset,\CONTROL_{\Agent}\} 
}{%
 \tribu{\NatureField} }
\mtext{ and }
\tribu{\Information}_{\Agent} 
\subset 
\oproduit{%
\tribu{\Control}_{\Principal} 
\otimes
\{\emptyset,\CONTROL_{\Agent}\} 
}{%
 \tribu{\NatureField} }
\eqfinp 
\label{eq:Stackelberg_Information}  
\end{equation}
Even if the players are called leader and follower, 
there is no explicit time arrow in~\eqref{eq:Stackelberg_Information}.
It is the information structure that reveals the time arrow.
Indeed, if we label the leader~$\Principal$ as~$t=0$ (first player)
and the follower~$\Agent$ as~$t=1$ (second player), 
the inclusions~\eqref{eq:Stackelberg_Information} become the inclusions 
\( \tribu{\Information}_{0} \subset \oproduit{%
\{ \emptyset, \CONTROL_{0} \} \otimes 
\{ \emptyset, \CONTROL_{1} \}}{\tribu{\NatureField}} \),
and 
\( \tribu{\Information}_{1} \subset \oproduit{%
\tribu{\Control}_{0} \otimes 
\{ \emptyset, \CONTROL_{1} \}}{\tribu{\NatureField}} \):
the sequence \( \tribu{\Information}_{0}, \tribu{\Information}_{1} \)
of information fields 
is ``adapted'' to the filtration
\( \oproduit{%
\{ \emptyset, \CONTROL_{0} \} \otimes 
\{ \emptyset, \CONTROL_{1} \}}{\tribu{\NatureField}}
\subset \oproduit{%
\tribu{\Control}_{0} \otimes 
\{ \emptyset, \CONTROL_{1} \}}{\tribu{\NatureField}} \).
But if we label the leader~$\Principal$ as~$t=1$ 
and the follower~$\Agent$ as~$t=0$,
the new sequence of information fields would not be 
``adapted'' to the new filtration.
It is the information structure that prevents 
the follower to play first, but that makes possible
the leader to play first and the follower to play second.

\subsection{Solvability and causality}
\label{Solvability_Causality}

In the Kuhn formulation, Witsenhausen says that ``For any
combination of policies one can find the corresponding outcome by
following the tree along selected branches, and this is an explicit
procedure'' \cite{Witsenhausen:1971a}. 
In the Witsenhausen formulation, there is no such explicit procedure
as, for any combination of policies, there may be none, one or many solutions to 
the closed-loop equations; these equations express the decision of one agent as 
the output of his strategy, supplied with Nature outcome and with all agents decisions.
This is why Witsenhausen needs a property of solvability,
whereas Kuhn does not need it as it is hardcoded in the tree structure.
Then, Witsenhausen defines the notion of causality (which parallels that of
tree) and proves in~\cite{Witsenhausen:1971a} that solvability holds true under causality.
Yet, in~\cite[Theorem~2]{Witsenhausen:1971a},
Witsenhausen exhibits an example of noncausal W-model that is solvable.

\subsubsection{Solvability}

With any given pure W-strategies profile $\wstrategy = \sequence{\wstrategy_{\agent}}{\agent \in \AGENT}
\in \prod_{\agent \in \AGENT} \WSTRATEGY_{\agent} $ we associate the set-valued
mapping
\begin{align}
\SetValuedReducedSolutionMap_\wstrategy: \Omega
  & \rightrightarrows \prod_{\bgent \in \AGENT} \CONTROL_{\bgent} 
\label{eq:SetValuedReducedSolutionMap}
  \\
  \omega
  & \mapsto \Bset{ \sequence{\control_\bgent}{\bgent \in \AGENT} \in 
    \prod_{\bgent \in \AGENT} \CONTROL_{\bgent} }{%
\control_{\agent} = \wstrategy_{\agent}\bp{\omega,
\sequence{\control_\bgent}{\bgent \in \AGENT} }
    \eqsepv \forall \agent \in \AGENT}
    \eqfinp
    \nonumber
\end{align}
%

With this definition, we slightly reformulate below
how Witsenhausen introduced the property of solvability.

\begin{definition}(\cite{Witsenhausen:1971a,Witsenhausen:1975})
  \begin{subequations}
    The \emph{solvability property} holds true for the W-model of Definition~\ref{de:W-model}
    when,
    for any pure W-strategies profile $\wstrategy = \sequence{\wstrategy_{\agent}}{\agent \in \AGENT}
    \in \prod_{\agent \in \AGENT} \WSTRATEGY_{\agent} $, 
the set-valued mapping~$\SetValuedReducedSolutionMap_{\wstrategy}$
in~\eqref{eq:SetValuedReducedSolutionMap}
 is a mapping whose domain is $\Omega$, that is,
    the cardinal of $\SetValuedReducedSolutionMap_{\wstrategy}\np{\omega}$ 
is equal to one, for any state of nature $\omega \in \Omega$. 

    Thus, under the solvability property, for any state of nature $\omega \in \Omega$, 
    there exists one, and only one, decision profile
    $\sequence{\control_\bgent}{\bgent \in \AGENT} \in 
    \prod_{\bgent \in \AGENT} \CONTROL_{\bgent}$
    which is a solution of the \emph{closed-loop equations}
    \begin{equation}
      \control_{\agent} = \wstrategy_{\agent}\bp{\omega, \sequence{\control_\bgent}{\bgent \in \AGENT}}
      \eqsepv
      \forall \agent \in \AGENT
      \eqfinp
      \label{eq:solution_map_IFF}
    \end{equation}
    In this case, we define the \emph{solution map} 
    \begin{equation}
      \ReducedSolutionMap_\wstrategy: \Omega \rightarrow \prod_{\bgent \in \AGENT} \CONTROL_{\bgent}
      \label{eq:solution_map}
    \end{equation}
    as the unique element contained in the image set
$\SetValuedReducedSolutionMap_{\wstrategy}\np{\omega}$ that is, 
for all $\sequence{\control_\bgent}{\bgent \in \AGENT} \in 
    \prod_{\bgent \in \AGENT} \CONTROL_{\bgent}$, 
    $\ReducedSolutionMap_\wstrategy\np{\omega} = 
\sequence{\control_\bgent}{\bgent \in \AGENT} \iff
    \SetValuedReducedSolutionMap_{\wstrategy}\np{\omega} = \na{\sequence{\control_\bgent}{\bgent \in \AGENT}}$.
%
    %
  \end{subequations}
  \label{de:solvability}
\end{definition}

\subsubsection{Configuration-orderings}

In his articles \cite{Witsenhausen:1971a,Witsenhausen:1975},
Witsenhausen introduces a notion of causality that relies on 
suitable configuration-orderings.
Here, we introduce our own notations, because they make possible a compact formulation 
of the causality property and, later, of perfect recall.
\medskip

\begin{subequations}
For any finite set~\( \SET \), let \( \cardinal{\SET} \) denote the cardinal
of~\( \SET \). 
  Thus, \( \numAGENT \) denotes the cardinal of the set~\( \AGENT \), that is,
\( \numAGENT \) is the number of agents.
For $k \in \ic{1,\numAGENT}$, let $\ORDER^k$ denote the set of
$k$-orderings, that is, injective
mappings from $\ic{1, k}$ to $\AGENT$:
\begin{equation}
  \ORDER^k=\defset{ \kappa: \ic{1,k} \to \AGENT }%
{ \kappa \mtext{ is an injection} }
\eqfinp 
\label{eq:ORDER_k}
\end{equation}
The set \( \ORDER^{\numAGENT} \) is the set of \emph{total orderings} of
agents in $\AGENT$, that is, bijective
mappings from $\ic{1,\numAGENT}$ to $\AGENT$
(in contrast with \emph{partial orderings} in~$\ORDER^k$ for $k < \numAGENT$).
For any $k \in \ic{1, \numAGENT}$, any ordering $\kappa \in \ORDER^k$,
and any integer $\ell \le k$, \( \kappa_{\vert \{ 1, \ldots, \ell \}} \)
is the restriction of the ordering~$\kappa$ to the first $\ell$~integers.
For any $k \in \ic{1,\numAGENT}$, there is a natural mapping $\cut_k$ 
\begin{align}
\cut_k: \ORDER^{\numAGENT} \rightarrow \ORDER^k 
\eqsepv 
\totalordering \mapsto \totalordering_{\vert \{ 1, \ldots, k \} } 
\eqfinv 
\label{eq:cut}
\end{align}
which is the restriction of any (total) ordering of~$\AGENT$ 
to~$\ic{1,k}$.
We define the \emph{set of orderings} by
\begin{equation}
  \ORDER= \bigcup_{ k \in \ic{0,\numAGENT}} \ORDER^k
\quad \text{ where } \ORDER^0 = \{ \emptyset \} 
\eqfinp 
\label{eq:ORDER}
\end{equation}
For any \( k \in \ic{1,\numAGENT} \), and any $k$-ordering~$\kappa \in \ORDER^k$,
we define the \emph{range} $\range{\kappa}$ of the ordering~$\kappa$ as the
subset 
\begin{align}
  \range{\kappa}
&=
\ba{ \kappa(1), \ldots, \kappa(k) }
\subset \AGENT
\eqsepv \forall \kappa \in \ORDER^k
\eqfinv
  \label{range_kappa}
  \intertext{the \emph{cardinal} $\cardinal{\kappa}$ of the ordering $\kappa$ as
  the integer}
\cardinal{\kappa}
&=k
\in \ic{1, \numAGENT}
\eqsepv \forall \kappa \in \ORDER^k
\eqfinv
  \label{cardinal_kappa}
  \intertext{the \emph{last element} $\LastElement{\kappa}$ of the ordering $\kappa$ as the agent}
\LastElement{\kappa}
&=\kappa(k)
\in \AGENT
\eqsepv \forall \kappa \in \ORDER^k
\eqfinv
  \label{LastElement_kappa}
  \intertext{the \emph{restriction} $\FirstElements{\kappa}$ of the ordering $\kappa$ to the first $k{-}1$ elements}
\FirstElements{\kappa}
&= \kappa_{\vert \{ 1, \ldots, k-1 \}} \in \ORDER^{k-1}
\eqsepv \forall \kappa \in \ORDER^k
\eqfinp
  \label{FirstElements_kappa}
\end{align}
\end{subequations}
With the notations introduced,
any ordering $\kappa \in \ORDER \setminus \{ \emptyset \}$
can be written as $\kappa = \np{\FirstElements{\kappa}, \LastElement{\kappa}}$,
with the convention that 
 $\kappa = \np{\LastElement{\kappa}}$ when $\kappa \in \ORDER^1$.

\begin{definition}(\cite{Witsenhausen:1971a,Witsenhausen:1975})
A \emph{configuration-ordering} is a mapping 
$\ordering: \HISTORY \to \ORDER^{\numAGENT}$ from configurations towards total orderings.
With any configuration-ordering~$\ordering$, 
and any ordering $\kappa \in \ORDER$,
we associate the subset \( \HISTORY_{\kappa}^{\ordering} \subset \HISTORY \)
of configurations defined by
\begin{equation}
  \HISTORY_{\kappa}^{\ordering} =
  \defset{\history \in \HISTORY}{\psi_{\cardinal{\kappa}}\bp{\ordering(\history)} =\kappa}  
  \eqsepv \forall \kappa \in \ORDER
  \eqfinp
  \label{eq:HISTORY_k_kappa}
\end{equation}
By convention, we put 
\(   \HISTORY_{\emptyset}^{\ordering} = \HISTORY \). 
\label{de:configuration-ordering}
\end{definition}
Along each configuration $\history \in \HISTORY$, the agents are ordered by
$\ordering(\history) \in \ORDER^{\numAGENT}$.
The set~$\HISTORY_{\kappa}^{\ordering}$ in~\eqref{eq:HISTORY_k_kappa} 
contains all the configurations 
for which the agent~\( \kappa(1) \) is acting first, 
the agent~\( \kappa(2) \) is acting second, \ldots, till 
the last agent~\( \LastElement{\kappa}=\kappa(\cardinal{\kappa}) \) acting at stage~$\cardinal{\kappa}$.

\subsubsection{Causality}

In his article \cite{Witsenhausen:1971a},
Witsenhausen introduces a notion of causality 
and he proves that causal systems are solvable.

The following definition can be interpreted as follows.
In a causal W-model, there exists a configuration-ordering with the following
property: when an agent is called to play --- as he is the last one in an
ordering --- what he knows cannot depend on decisions made by agents
that are not his predecessors 
(in the range of the ordering under consideration).

\begin{definition}(\cite{Witsenhausen:1971a,Witsenhausen:1975})
  \label{de:causality}
  A W-model (as in Definition~\ref{de:W-model}) 
  is \emph{causal} if there exists (at least) one
  configuration-ordering $\ordering: \HISTORY \to \ORDER^{\numAGENT}$ 
  with the property that 
  \begin{equation}
    \label{eq:causality_a}
    \HISTORY_{\kappa}^{\ordering} \cap \History \in 
    \tribu{\History}_{\range{\FirstElements{\kappa}}}
    \eqsepv 
    \forall \History \in \tribu{\Information}_{\LastElement{\kappa}}
    \eqsepv 
    \forall \kappa \in \ORDER 
    \eqfinp
  \end{equation}
\end{definition}
Otherwise said, once we know the first $\cardinal{\kappa}$~agents, 
the information of the (last) agent~$\LastElement{\kappa}$ depends at most
 on the decisions of the (previous) agents in the range~$\range{\FirstElements{\kappa}}$.
In~\eqref{eq:causality_a}, the subset~$\HISTORY_{\kappa}^{\ordering} \subset \HISTORY$ of configurations 
has been defined in~\eqref{eq:HISTORY_k_kappa},
the last agent~$\LastElement{\kappa}$ in~\eqref{LastElement_kappa},
the partial ordering~$\FirstElements{\kappa}$ in~\eqref{FirstElements_kappa},
the range~$\range{\FirstElements{\kappa}}$ in~\eqref{range_kappa},
and --- using the definition~\eqref{eq:sub_history_field_BGENT} 
of the subfield~$\tribu{\History}_{\BGENT}$ of~\( \tribu{\History} \),
with the subset~$\BGENT=\range{\FirstElements{\kappa}}$ of agents 
defined in~\eqref{FirstElements_kappa} and~\eqref{range_kappa} --- 
the subfield $\tribu{\History}_{\range{\FirstElements{\kappa}}}$ 
of~\( \tribu{\History} \) is 
\begin{equation}
      \tribu{\History}_{\range{\FirstElements{\kappa}}}
     =
      \tribu{\NatureField} \otimes 
\bigotimes \limits_{\agent \in \range{\FirstElements{\kappa}}} \tribu{\Control}_{\agent}
      \otimes
      \bigotimes \limits_{\bgent \not\in \range{\FirstElements{\kappa}} } \{
      \emptyset, \CONTROL_\bgent \}
    \subset \tribu{\History}
      \eqfinp
\label{eq:causality_b}
\end{equation}

Witsenhausen's intrinsic model deals with agents, information and strategies,
but not with players and preferences.
We now turn to extending the Witsenhausen's intrinsic model to games.

\section{Finite games in intrinsic form}
\label{Finite_games_in_intrinsic_form}

We are now ready to embed Witsenhausen's intrinsic model
into game theory.
In~\S\ref{Definition_of_a_finite_game_in_intrinsic_form},
we introduce a formal definition of a finite game in intrinsic form (W-game),
and in~\S\ref{Mixed_and_behavioral_strategies}
we introduce three notions of 
``randomization'' of pure strategies --- mixed, product-mixed and behavioral.
In~\S\ref{Strategy_equivalence},
we discuss relations between product-mixed and behavioral W-strategies.

In what follows, when $\SET$ is a finite set, we denote by 
\( \Delta\np{\SET} \) the set of probability distributions over~$\SET$.
When needed, the set~\( \Delta\np{\SET} \) can be equipped with 
the Borel topology and
the Borel $\sigma$-field, as \( \Delta\np{\SET} \)
is homeomorphic to the simplex~$\Sigma_{\cardinal{\SET}}$ of~$\RR^{\cardinal{\SET}}$, 
and is thus homeomorphic to a closed 
subset of a finite dimensional space.

\subsection{Definition of a finite game in intrinsic form (W-game)}
\label{Definition_of_a_finite_game_in_intrinsic_form}

We introduce a formal definition of a finite game in intrinsic form (W-game).

\begin{definition}
  A \emph{finite W-game}
  \( \Bp{ 
    \bp{ 
      \sequence{\AGENT^{\player}}{\player \in \PLAYER}, 
      \np{\Omega, \tribu{\NatureField}},
      \sequence{\CONTROL_{\agent}, \tribu{\Control}_{\agent},
        \tribu{\Information}_{\agent}}{\agent \in \bigcup_{\player \in \PLAYER} \AGENT^{\player}} },
    (\preference^{\player})_{\player \in \PLAYER}
  }
  \),
  or a \emph{finite game in intrinsic form},
  is a made of 
  \begin{itemize}
  \item 
    a family \( \sequence{\AGENT^{\player}}{\player \in \PLAYER} \),
where the set~$\PLAYER$ of \emph{players} is finite,
    of     two by two disjoint nonempty sets     
    whose union $\AGENT= \bigcup_{\player \in \PLAYER} \AGENT^{\player}$ is the set 
    of \emph{agents}; each subset~$\AGENT^{\player}$ is interpreted as 
    the subset of executive agents of the 
    \emph{player} \( \player \in \PLAYER \),
  \item 
    a finite W-model 
    \( \bp{ 
      \AGENT,
      (\Omega, \tribu{\NatureField}), 
      \sequence{\CONTROL_{\agent}, \tribu{\Control}_{\agent},
        \tribu{\Information}_{\agent}}{\agent \in \AGENT} } \),
    as in Definition~\ref{de:W-model},
  \item 
    for each player $\player \in \PLAYER$,
    a preference relation~$\preference^{\player}$ 
    on the set of mappings 
\( \Omega \to \Delta\np{ \prod_{\bgent \in \AGENT} \CONTROL_{\bgent} } \).
  \end{itemize}
  A finite W-game is said to be solvable (resp. causal)
  if the underlying W-model 
  is solvable as in Definition~\ref{de:solvability}
  (resp. causal as in Definition~\ref{de:causality}).
  \label{de:W-game}
\end{definition}

We comment on the preference relations~$\preference^{\player}$
on the set of mappings 
\( \Omega \to \Delta\np{ \prod_{\bgent \in \AGENT} \CONTROL_{\bgent} } \).
Our definition covers
(like in~\cite{Blume-Brandenburger-Dekel:1991})
the most traditional preference relation~$\preference^{\player}$,
which is the numerical \emph{expected utility} preference.
In this latter, each player~$\player \in \PLAYER$
is endowed, on the one hand, with a \emph{criterion} (payoff), that is,
a measurable function $\criterion_\player: (\HISTORY, \tribu{\History}) 
\rightarrow [-\infty,+\infty[$, 
and, on the other hand, with a \emph{belief}, that is,
a probability distribution
 $\probability^\player: \tribu{\NatureField} \rightarrow [0, 1]$
over the states of Nature $(\Omega, \tribu{\NatureField})$.
Then, given \( K_i : \Omega \to \Delta\np{ \prod_{\bgent \in \AGENT}
  \CONTROL_{\bgent} } \), $i=1,2$, one says that
  \( K_1 \preference^{\player} K_2 \) if 
\begin{align*}
  \int_{\Omega} \probability^\player\np{d\omega}
  &
    \int_{ \prod_{\bgent \in \AGENT} \CONTROL_{\bgent} } 
    \criterion_\player\bp{\omega, \sequence{\control_\bgent}{\bgent \in \AGENT}}
    K_1\bp{\omega, d\sequence{\control_\bgent}{\bgent \in \AGENT}}
  \\
  & \leq
  \int_{\Omega} \probability^\player\np{d\omega}
  \int_{ \prod_{\bgent \in \AGENT} \CONTROL_{\bgent} } 
  \criterion_\player\bp{\omega, \sequence{\control_\bgent}{\bgent \in \AGENT}}
  K_2\bp{\omega, d\sequence{\control_\bgent}{\bgent \in \AGENT}}
  \eqfinp
\end{align*}

Note also that the Definition~\ref{de:W-game} includes Bayesian games,
by specifying a product structure for $\Omega$ --- where some factors represent
types of players, and one factor represents chance --- and by considering
additional probability distributions.

\subsection{Mixed, product-mixed and behavioral strategies}
\label{Mixed_and_behavioral_strategies}

We introduce three notions of 
``randomization'' of pure strategies: mixed, product-mixed and behavioral.

The notion of mixed strategy comes from the study of games in normalized form,
where each player  has to select a pure strategy,
the collection of which determines a unique outcome. 
If we allow the players to select their pure strategy at random, 
the lottery they use is called a mixed strategy. 
For an extensive game,  a mixed strategy can be interpreted in the following sense.
First, the player selects a pure strategy using the lottery. 
Second, the game is played. When the player is called by the umpire,
she plays the action specified by the selected pure strategy for the current information set. 

Observe that there is only one dice roll per player. This dice roll determines the reactions of the player for every situation of the game. 
It would be more natural to let the player roll a dice every time she has to
play, leading to the notion of behavioral strategy. 

A fundamental question in game theory is to identify settings in which those two
views (mixed strategy and behavioral strategy) are equivalent.
To formulate this question in the W-game framework, 
we will give formal definitions of these two notions of randomization.
We will also add a third one, that we call
product-mixed strategy, and which 
is in the spirit of Aumann~\cite{Aumann:1964}, 
as each agent (corresponding to time index in~\cite{Aumann:1964}) 
``generates'' strategies from a random device that is independent of all the
other agents.

\subsubsection{Mixed W-strategies}

For any agent~$\agent \in \AGENT$, the set~\( \WSTRATEGY_{\agent}  \) 
of pure W-strategies for agent~$\agent$ (see Definition~\ref{de:W-strategy})
is finite, hence the set \( \Delta\np{\WSTRATEGY_{\agent} } \)
of probability distributions over~\( \WSTRATEGY_{\agent}  \)
is is homeomorphic to 
$\Sigma_{|\WSTRATEGY_{\agent}|}$, the simplex of
$\RR^{|\WSTRATEGY_{\agent}|}$, and is thus homeomorphic to a closed 
subset of a finite dimensional space.
So is the space \( \Delta\np{\WSTRATEGY} \)
of probability distributions over the set~\( \WSTRATEGY \)
of pure W-strategies profiles. 
We will also consider the sets
\begin{equation}
  \WSTRATEGY^{\player} = 
\prod_{\agent \in \AGENT^{\player}} \WSTRATEGY_{\agent}
\eqsepv \forall \player \in \PLAYER
\label{eq:W-strategies_profiles_player}
\end{equation}
of pure W-strategies profiles, player by player, and the set
\( \Delta\np{\WSTRATEGY^{\player}} \) of probability distributions
over~\( \WSTRATEGY^{\player} \).

\begin{subequations}
\begin{definition}
We consider a finite W-game, as in Definition~\ref{de:W-game}.
A \emph{mixed W-strategy} 
for player~$\player \in \PLAYER$ 
is an element~$\MixedStrategy^{\player}$ of \( \Delta\np{\WSTRATEGY^{\player} } \),
the set of probability distributions
over the set~\( \WSTRATEGY^{\player} \) in~\eqref{eq:W-strategies_profiles_player}
of W-strategies of the executive agents in~$\AGENT^{\player}$.
The set of \emph{mixed W-strategies profiles} is
\( \prod_{\player \in \PLAYER} \Delta\np{\WSTRATEGY^{\player}} \). 
%
\label{de:mixed_W-strategy}
\end{definition}
A mixed W-strategies profile is denoted by 
\begin{equation}
\MixedStrategy = \np{\MixedStrategy^{\player}}_{\player \in \PLAYER}
\in \prod_{\player \in \PLAYER} \Delta\bp{ \WSTRATEGY^{\player} }
\eqfinv
\label{eq:mixed_W-strategies_profiles}
\end{equation}
and, when we focus on player~$\player$, we write 
\begin{equation}
\MixedStrategy = 
\couple{{\MixedStrategy}^{-\player}}{{\MixedStrategy}^{\player}} \in
\Delta\bp{ \WSTRATEGY^{\player} } \times
\prod_{\player'\neq \player} \Delta\bp{ \WSTRATEGY^{\player'} }
\eqfinp
\label{eq:mixed_W-strategies_profiles_player}
\end{equation}
\end{subequations}

\begin{subequations}
\begin{definition}
We consider a solvable finite W-game (see Definition~\ref{de:W-game}),
and \( \MixedStrategy = \np{\MixedStrategy^{\player}}_{\player \in \PLAYER}
\in \prod_{\player \in \PLAYER} \Delta\np{ \WSTRATEGY^{\player} } \)
a mixed W-strategies profile as in~\eqref{eq:mixed_W-strategies_profiles}.
For any \( \omega\in\Omega\), we denote by 
\begin{equation}
 \QQ^{\omega}_{\MixedStrategy} =
 \QQ^{\omega}_{\np{\MixedStrategy^{\player}}_{\player \in \PLAYER}} = 
\bp{ \bigotimes_{\player \in \PLAYER}\MixedStrategy^{\player} }
\circ \bp{ \ReducedSolutionMap\np{\omega,\cdot}}^{-1}
\in \Delta\bp{ \prod_{\bgent \in \AGENT} \CONTROL_{\bgent} }
\label{eq:push_forward_probability_a}
\end{equation}
the pushforward probability,
on the space \( \bp{\prod_{\bgent \in \AGENT} \CONTROL_{\bgent},
\bigotimes \limits_{\bgent \in \AGENT} \tribu{\Control}_{\bgent} } \)
of the product probability distribution~\( \bigotimes_{\player \in
  \PLAYER}\MixedStrategy^{\player} \)
on~\( \prod_{\player \in \PLAYER} \WSTRATEGY^{\player} = \WSTRATEGY \)
by the mapping
\begin{equation}
  \ReducedSolutionMap\np{\omega,\cdot} : \WSTRATEGY \to 
\prod_{\bgent \in \AGENT} \CONTROL_{\bgent} 
 \eqsepv 
\wstrategy \mapsto \ReducedSolutionMap_{\wstrategy}(\omega) 
\eqfinv
\label{eq:push_forward_probability_b}
\end{equation}
where \( \ReducedSolutionMap_{\wstrategy} \) is the 
solution map~\eqref{eq:solution_map}, 
which exists by the solvability assumption.
\end{definition}
\label{eq:push_forward_probability}
\end{subequations}

By~\eqref{eq:solution_map_IFF}, which defines the solution map,
and by definition of a pushforward probability,
we have, for any configuration~\( \bp{\omega, 
\sequence{\control_\bgent}{\bgent\in\AGENT} } \in\HISTORY \),
\begin{align*}
  \QQ^{\omega}_{\MixedStrategy}
  \bp{\sequence{\control_\bgent}{\bgent\in\AGENT} }
  & =
    \bp{\bigotimes_{\player \in \PLAYER}\MixedStrategy^{\player}}
    \Bp{ \ReducedSolutionMap\np{\omega,\cdot}^{-1}
    \bp{\sequence{\control_\bgent}{\bgent\in\AGENT} } } 
  \\
  &=
    \prod_{\player \in \PLAYER} \MixedStrategy^{\player}
    \bgp{\Bset{ \sequence{\wstrategy_{\agent}}{\agent\in\AGENT^{\player} }
    \in\WSTRATEGY^{\player} }%
    { \lambda_{\agent}\bp{\omega, 
    \sequence{\control_\bgent}{\bgent\in\AGENT} } =\control_{\agent}
    \eqsepv \forall \agent\in \AGENT^{\player} }}
    \eqfinp
\end{align*}

\subsubsection{Product-mixed W-strategies}

In a mixed W-strategy, the executive agents of player~$\player \in \PLAYER$ 
can be correlated because the probability~$\MixedStrategy^{\player}$
in Definition~\ref{de:mixed_W-strategy}
is a joint probability on the product space
\( \WSTRATEGY^{\player} =\prod_{\agent \in \AGENT^{\player}} \WSTRATEGY_{\agent}
\).
We now introduce product-mixed W-strategies, where 
the executive agents of player~$\player \in \PLAYER$ are independent
in the sense that the probability~$\MixedStrategy^{\player}$ is  the
product of individual probabilities,
each of them on the individual space~\( \WSTRATEGY_{\agent} \)
of the strategies of one agent~$\agent$. 

\begin{definition}
  \label{de:product-mixed_W-strategy}
  We consider a finite W-game, as in Definition~\ref{de:W-game}.
  A \emph{product-mixed W-strategy} for player~$\player \in \PLAYER$ is
  an element 
  \( \ProductMixedStrategy^{\player}=
  \sequence{ \ProductMixedStrategy^{\player}_{\agent}}{\agent \in \AGENT^{\player}} \) of 
  \( \prod_{\agent \in \AGENT^{\player}} \Delta\np{\WSTRATEGY_{\agent}} \).
\end{definition}
The product-mixed W-strategy 
\( \sequence{ \ProductMixedStrategy^{\player}_{\agent}}{\agent \in \AGENT^{\player}} \)
induces a product probability\footnote{%
By an abuse of notation, we will sometimes write 
\(  \ProductMixedStrategy^{\player}=
\otimes_{\agent\in\AGENT^{\player}}\ProductMixedStrategy^{\player}_{\agent} \).
\label{ft:product-mixed_W-strategy}}
\( 
\otimes_{\agent\in\AGENT^{\player}}\ProductMixedStrategy^{\player}_{\agent} \)
on the set~\( \WSTRATEGY^{\player} \),
which is a mixed W-strategy
as in Definition~\ref{de:mixed_W-strategy}.

\subsubsection{Behavioral W-strategies}

We formalize the intuition of behavioral strategies in W-games
by the following definition  of behavioral W-strategies.

\begin{definition}
We consider a finite W-game, as in Definition~\ref{de:W-game}.
A \emph{behavioral W-strategy}
for player~$\player \in \PLAYER$ 
is a family 
\( \BehavioralWStrategy^{\player} = 
\sequence{\BehavioralWStrategy^{\player}_{\agent}}{\agent \in \AGENT^{\player}} \),
where 
\begin{equation}
\BehavioralWStrategy^{\player}_{\agent} :
\HISTORY \times \tribu{\Control}_{\agent} \to [0,1] 
\eqsepv 
\np{\history, \Control_{\agent}} \mapsto  \BehavioralWStrategy^{\player}_{\agent}
\conditionalySet{\Control_{\agent}}{\history} 
\end{equation}
is an \( \tribu{\Information}_{\agent} \)-measurable stochastic kernel
for each \( \agent \in \AGENT^{\player} \), that is,
if one of the two equivalent statements holds true:
\begin{enumerate}
\item 
\label{it:behavioral_W-strategy_abstract}
on the one hand, 
the function
\( \history \in \HISTORY \mapsto \BehavioralWStrategy^{\player}_{\agent}
\conditionaly{\control_{\agent}}{\history} \) 
is \( \tribu{\Information}_{\agent} \)-measurable,
for any \( \control_{\agent}\in\CONTROL_{\agent} \) and,
on the other hand, 
each \( \BehavioralWStrategy^{\player}_{\agent}
\conditionalySet{ \cdot }{ \history } \) is 
a probability distribution on the finite set~\( \CONTROL_{\agent} \),
for any \( \history \in \HISTORY \),
\item 
\label{it:behavioral_W-strategy}
on the one hand, 
\( \history' \sim_{\agent} \history'' \implies
\BehavioralWStrategy^{\player}_{\agent}
\conditionaly{\control_{\agent}}{\history'}
=
\BehavioralWStrategy^{\player}_{\agent}
\conditionaly{\control_{\agent}}{\history''}
 \), for any \( \control_{\agent}\in\CONTROL_{\agent} \)
and, on the other hand,
for any \( \history \in \HISTORY \),
we have \( \BehavioralWStrategy^{\player}_{\agent}
\conditionaly{\control_{\agent}}{\history} \geq 0 \),
\( \forall \control_{\agent}\in\CONTROL_{\agent} \),
and
\(  \sum_{ \control_{\agent}\in\CONTROL_{\agent} } 
\BehavioralWStrategy^{\player}_{\agent}
\conditionaly{\control_{\agent}}{\history} = 1 \).
\end{enumerate}
\label{de:behavioral_W-strategy}
\end{definition}
The equivalences come from the fact that the sets \( \HISTORY \) and \( \CONTROL_{\agent} \)
are finite and equipped with their respective complete fields, and
by Proposition~\ref{pr:measurability}, 
and especially~\eqref{eq:strategy_atoms_rho}.

\subsection{Relations between product-mixed and behavioral W-strategies}
\label{Strategy_equivalence}

Here, we show that product-mixed and behavioral W-strategies
are ``equivalent'' in the sense that
a product-mixed W-strategy naturally induces a 
behavioral W-strategy, and that a behavioral W-strategy can be ``realized''
as a product-mixed W-strategy
(see Figure~\ref{fig:randomization}).

\subsubsection*{From product-mixed to behavioral W-strategies}

We prove that a product-mixed W-strategy naturally induces a 
behavioral W-strategy.

\begin{proposition}
  \label{pr:PMtoB}
  We consider a finite W-game, as in Definition~\ref{de:W-game},
  and a player \( \player \in \PLAYER \).

  For any product-mixed W-strategy
  \( \ProductMixedStrategy^{\player}=
  \sequence{ \ProductMixedStrategy^{\player}_{\agent}}{\agent \in
    \AGENT^{\player}} \in \prod_{\agent \in \AGENT^{\player}} \Delta\np{\WSTRATEGY_{\agent}} \), 
  as in Definition~\ref{de:product-mixed_W-strategy}, 
  we define, for any agent~\( \agent\in\AGENT^{\player} \), 
  \begin{equation}
    \PMtoB{\ProductMixedStrategy}^{\player}_{\agent}
    \conditionaly{\control_{\agent}}{\history}
    =
    \ProductMixedStrategy^{\player}_{\agent}
    \Bp{
    \defset{ \wstrategy_{\agent}\in\WSTRATEGY_{\agent}}%
    {\wstrategy_{\agent}\np{\history}=\control_{\agent}}}
    \eqsepv \forall \control_{\agent}\in\CONTROL_{\agent} 
    \eqsepv \forall \history \in \HISTORY 
    \eqfinp
    \label{eq:PMtoB}
  \end{equation}
  Then, \( \PMtoB{\ProductMixedStrategy}^{\player}=
  \sequence{ \PMtoB{\ProductMixedStrategy}^{\player}_{\agent}}{\agent \in
    \AGENT^{\player}} \) is a behavioral W-strategy, 
  as in Definition~\ref{de:behavioral_W-strategy}.
\end{proposition}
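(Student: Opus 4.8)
The plan is to verify directly that $\PMtoB{\ProductMixedStrategy}^{\player} = \sequence{\PMtoB{\ProductMixedStrategy}^{\player}_{\agent}}{\agent \in \AGENT^{\player}}$ satisfies the requirements of Definition~\ref{de:behavioral_W-strategy}, using the second (equivalent) characterization stated there: for every agent~$\agent \in \AGENT^{\player}$, the kernel $\PMtoB{\ProductMixedStrategy}^{\player}_{\agent}$ must be \emph{(i)} constant on the $\sim_{\agent}$-equivalence classes and \emph{(ii)} a probability distribution on~$\CONTROL_{\agent}$ for each fixed configuration. First I would note that, since $\WSTRATEGY_{\agent}$ is finite and carries its complete field, the set $\defset{\wstrategy_{\agent}\in\WSTRATEGY_{\agent}}{\wstrategy_{\agent}\np{\history}=\control_{\agent}}$ appearing in~\eqref{eq:PMtoB} is measurable, so that $\PMtoB{\ProductMixedStrategy}^{\player}_{\agent}\conditionaly{\control_{\agent}}{\history}$ is well defined as the $\ProductMixedStrategy^{\player}_{\agent}$-measure of that set.

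For point \emph{(ii)}, fix $\history \in \HISTORY$. Nonnegativity of $\PMtoB{\ProductMixedStrategy}^{\player}_{\agent}\conditionaly{\control_{\agent}}{\history}$ is immediate, being the value of a probability measure on a set. Moreover, as $\control_{\agent}$ ranges over the finite set~$\CONTROL_{\agent}$, the sets $\defset{\wstrategy_{\agent}\in\WSTRATEGY_{\agent}}{\wstrategy_{\agent}\np{\history}=\control_{\agent}}$ form a partition of~$\WSTRATEGY_{\agent}$, because each pure W-strategy $\wstrategy_{\agent}$ sends $\history$ to exactly one decision $\wstrategy_{\agent}\np{\history}\in\CONTROL_{\agent}$. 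Hence $\sum_{\control_{\agent}\in\CONTROL_{\agent}} \PMtoB{\ProductMixedStrategy}^{\player}_{\agent}\conditionaly{\control_{\agent}}{\history} = \ProductMixedStrategy^{\player}_{\agent}\np{\WSTRATEGY_{\agent}} = 1$, so $\PMtoB{\ProductMixedStrategy}^{\player}_{\agent}\conditionalySet{\cdot}{\history}$ is a probability distribution on~$\CONTROL_{\agent}$.

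For point \emph{(i)}, let $\history', \history'' \in \HISTORY$ with $\history' \sim_{\agent} \history''$. Every pure W-strategy $\wstrategy_{\agent}\in\WSTRATEGY_{\agent}$ is $\tribu{\Information}_{\agent}$-measurable by Definition~\ref{de:W-strategy}, so Proposition~\ref{pr:measurability}, and in particular~\eqref{eq:strategy_atoms_rho} applied to $\rho = \wstrategy_{\agent}$, gives $\wstrategy_{\agent}\np{\history'} = \wstrategy_{\agent}\np{\history''}$. Consequently the sets $\defset{\wstrategy_{\agent}\in\WSTRATEGY_{\agent}}{\wstrategy_{\agent}\np{\history'}=\control_{\agent}}$ and $\defset{\wstrategy_{\agent}\in\WSTRATEGY_{\agent}}{\wstrategy_{\agent}\np{\history''}=\control_{\agent}}$ coincide for every $\control_{\agent}\in\CONTROL_{\agent}$, whence $\PMtoB{\ProductMixedStrategy}^{\player}_{\agent}\conditionaly{\control_{\agent}}{\history'} = \PMtoB{\ProductMixedStrategy}^{\player}_{\agent}\conditionaly{\control_{\agent}}{\history''}$. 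This establishes characterization~\ref{it:behavioral_W-strategy} for each agent~$\agent\in\AGENT^{\player}$, so $\PMtoB{\ProductMixedStrategy}^{\player}$ is a behavioral W-strategy.

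There is no serious obstacle: the argument is essentially bookkeeping. The one point requiring care is the passage from ``each pure strategy $\wstrategy_{\agent}$ is $\tribu{\Information}_{\agent}$-measurable'' to ``$\wstrategy_{\agent}$ is constant on each $\sim_{\agent}$-class'', which is exactly the content of Proposition~\ref{pr:measurability}; everything else follows from finiteness and the elementary fact that the fibers of a function partition its domain. Note that the \emph{product} structure of~$\ProductMixedStrategy^{\player}$ plays no role in this direction — each agent's kernel is built from that agent's marginal $\ProductMixedStrategy^{\player}_{\agent}$ alone, and the product structure will only be relevant for the converse realization statement.
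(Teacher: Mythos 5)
Your proof is correct and follows essentially the same route as the paper's: both verify the characterization in Item~\ref{it:behavioral_W-strategy} of Definition~\ref{de:behavioral_W-strategy}, using Proposition~\ref{pr:measurability} (constancy of each $\tribu{\Information}_{\agent}$-measurable pure W-strategy on the $\sim_{\agent}$-classes) to get equality of the sets $\defset{\wstrategy_{\agent}\in\WSTRATEGY_{\agent}}{\wstrategy_{\agent}\np{\history}=\control_{\agent}}$ across equivalent configurations, and the fact that $\ProductMixedStrategy^{\player}_{\agent}$ is a probability on~$\WSTRATEGY_{\agent}$ for the normalization. Your explicit remark that the fibers of $\wstrategy_{\agent}\mapsto\wstrategy_{\agent}\np{\history}$ partition~$\WSTRATEGY_{\agent}$ is a small clarification the paper leaves implicit, but the argument is the same.
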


\begin{proof}
Let be given a product-mixed W-strategy
\( \ProductMixedStrategy^{\player}=
\sequence{ \ProductMixedStrategy^{\player}_{\agent}}{\agent \in
  \AGENT^{\player}} \in \prod_{\agent \in \AGENT^{\player}} \Delta\np{\WSTRATEGY_{\agent}} \). 

To prove that~\eqref{eq:PMtoB} defines a 
behavioral W-strategy, 
we have to show (see Item~\ref{it:behavioral_W-strategy_abstract}
in Definition~\ref{de:behavioral_W-strategy}), 
on the one hand, that the function
\( \history \in \HISTORY \mapsto \PMtoB{\ProductMixedStrategy}^{\player}_{\agent}
\conditionaly{\control_{\agent}}{\history} \) 
is \( \tribu{\Information}_{\agent} \)-measurable,
for any \( \control_{\agent}\in\CONTROL_{\agent} \) and,
on the other hand, that 
each \( \BehavioralWStrategy^{\player}_{\agent}
\conditionalySet{ \cdot }{ \history } \) is 
a probability on the finite set \( \CONTROL_{\agent} \),
for any \( \history \in \HISTORY \).
For this purpose, we will use the more practical characterization of 
Item~\ref{it:behavioral_W-strategy}
in Definition~\ref{de:behavioral_W-strategy}.

Let us fix \( \control_{\agent}\in\CONTROL_{\agent} \).
Let \( \history', \history'' \in \HISTORY \) be such that
\( \history' \sim_{\agent} \history'' \),
where we recall that the classes of the
equivalence relation~$\sim_{\agent}$ in~\eqref{eq:bracket_HISTORY}
are exactly the atoms in~\( \crochet{\tribu{\Information}_{\agent}} \).
By~\eqref{eq:common_value_equivalence_class_rho}, we have that 
\( \defset{ \wstrategy_{\agent}\in\WSTRATEGY_{\agent}}%
{\wstrategy_{\agent}\np{\history'}=\control_{\agent}}
=
\defset{ \wstrategy_{\agent}\in\WSTRATEGY_{\agent}}%
{\wstrategy_{\agent}\np{\history''}=\control_{\agent}}
\). 
Therefore, from the expression~\eqref{eq:PMtoB},
we have obtained that 
\( \history' \sim_{\agent} \history'' \implies
\PMtoB{\ProductMixedStrategy}^{\player}_{\agent}
\conditionaly{\control_{\agent}}{\history'}
=
\PMtoB{\ProductMixedStrategy}^{\player}_{\agent}
\conditionaly{\control_{\agent}}{\history''}
 \), hence that the function
\( \history \in \HISTORY \mapsto \PMtoB{\ProductMixedStrategy}^{\player}_{\agent}
\conditionaly{\control_{\agent}}{\history} \) 
is \( \tribu{\Information}_{\agent} \)-measurable,
by Proposition~\ref{pr:measurability}, 
and especially~\eqref{eq:strategy_atoms_rho}. 

By the expression~\eqref{eq:PMtoB},
we have that 
\( \PMtoB{\ProductMixedStrategy}^{\player}_{\agent}
\conditionaly{\control_{\agent}}{\history} \geq 0 \),
\( \forall \control_{\agent}\in\CONTROL_{\agent} \)
and, since \( \ProductMixedStrategy^{\player}_{\agent} \)
is a probability on~\( \Delta\np{\WSTRATEGY_{\agent}} \),
that 
\(  \sum_{ \control_{\agent}\in\CONTROL_{\agent} } 
\PMtoB{\ProductMixedStrategy}^{\player}_{\agent}
\conditionaly{\control_{\agent}}{\history} = 1 \).
\medskip

This ends the proof.
\end{proof}

\subsubsection*{From behavioral to product-mixed W-strategies}

We prove that a behavioral W-strategy can be ``realized''
as a product-mixed W-strategy.

\begin{proposition}
  \label{pr:BtoPM} 
  We consider a finite W-game, as in Definition~\ref{de:W-game},
  and a player \( \player \in \PLAYER \).

  For any behavioral W-strategy \( \BehavioralWStrategy^{\player} = 
  \sequence{\BehavioralWStrategy^{\player}_{\agent}}{\agent \in \AGENT^{\player}}
  \), as in Definition~\ref{de:behavioral_W-strategy},
  there exists a product-mixed W-strategy
  \( \BtoPM{\BehavioralWStrategy}^{\player} = 
  \sequence{\BtoPM{\BehavioralWStrategy}^{\player}_{\agent}}{\agent \in \AGENT^{\player}}\),
  as in Definition~\ref{de:product-mixed_W-strategy}, 
  with the property that, for any agent~\( \agent \) in \(\AGENT^{\player}\) we have
  \begin{equation}
    \BtoPM{\BehavioralWStrategy}^{\player}_{\agent}
    \Bp{\defset{ \wstrategy_{\agent}\in\WSTRATEGY_{\agent}}%
      {\wstrategy_{\agent}\np{\history}=\control_{\agent}}}
    = 
    \BehavioralWStrategy^{\player}_{\agent}
    \conditionaly{\control_{\agent}}{\history}
    \eqsepv \forall \control_{\agent}\in\CONTROL_{\agent} 
    \eqsepv \forall \history \in \HISTORY 
    \eqfinp
    \label{eq:BtoPM}
  \end{equation}
\end{proposition}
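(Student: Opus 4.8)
The plan is to build the product-mixed W-strategy agent by agent: for each executive agent~$\agent\in\AGENT^{\player}$ I will produce a probability distribution $\BtoPM{\BehavioralWStrategy}^{\player}_{\agent}\in\Delta\np{\WSTRATEGY_{\agent}}$, and then set $\BtoPM{\BehavioralWStrategy}^{\player}=\sequence{\BtoPM{\BehavioralWStrategy}^{\player}_{\agent}}{\agent\in\AGENT^{\player}}$, which is a product-mixed W-strategy in the sense of Definition~\ref{de:product-mixed_W-strategy} by construction. The first ingredient is to make explicit the structure of~$\WSTRATEGY_{\agent}$: by Proposition~\ref{pr:measurability}, in particular characterization~\eqref{eq:measurability-and-set-valued-map}, a mapping $\wstrategy_{\agent}:\HISTORY\to\CONTROL_{\agent}$ is a pure W-strategy of agent~$\agent$ if and only if it is constant on each atom of~$\tribu{\Information}_{\agent}$, and conversely every assignment of a decision in~$\CONTROL_{\agent}$ to each atom $\atom\in\crochet{\tribu{\Information}_{\agent}}$ defines a pure W-strategy. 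Hence there is a canonical bijection $\WSTRATEGY_{\agent}\cong\prod_{\atom\in\crochet{\tribu{\Information}_{\agent}}}\CONTROL_{\agent}$ sending $\wstrategy_{\agent}$ to the family $\sequence{\wstrategy_{\agent}\np{\atom}}{\atom\in\crochet{\tribu{\Information}_{\agent}}}$ of its common values on atoms; since all sets are finite with their complete fields, this bijection is measurable both ways.

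Second, I observe that by Item~\ref{it:behavioral_W-strategy} of Definition~\ref{de:behavioral_W-strategy} the function $\history\mapsto\BehavioralWStrategy^{\player}_{\agent}\conditionaly{\control_{\agent}}{\history}$ is constant on each atom, so the common-value notation of Proposition~\ref{pr:measurability} yields a well-defined probability distribution $\BehavioralWStrategy^{\player}_{\agent}\conditionalySet{\cdot}{\atom}$ on~$\CONTROL_{\agent}$, one per atom $\atom\in\crochet{\tribu{\Information}_{\agent}}$. I then define $\BtoPM{\BehavioralWStrategy}^{\player}_{\agent}$ to be the product distribution $\bigotimes_{\atom\in\crochet{\tribu{\Information}_{\agent}}}\BehavioralWStrategy^{\player}_{\agent}\conditionalySet{\cdot}{\atom}$ on $\prod_{\atom\in\crochet{\tribu{\Information}_{\agent}}}\CONTROL_{\agent}\cong\WSTRATEGY_{\agent}$; informally, to draw a pure strategy at random one draws, independently over the atoms, the decision at atom~$\atom$ according to $\BehavioralWStrategy^{\player}_{\agent}\conditionalySet{\cdot}{\atom}$.

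Finally, I would verify~\eqref{eq:BtoPM}. Fix $\agent\in\AGENT^{\player}$, $\control_{\agent}\in\CONTROL_{\agent}$ and $\history\in\HISTORY$, and let $\bracket{\history}_{\agent}\in\crochet{\tribu{\Information}_{\agent}}$ be the unique atom containing~$\history$. Under the identification above, $\defset{\wstrategy_{\agent}\in\WSTRATEGY_{\agent}}{\wstrategy_{\agent}\np{\history}=\control_{\agent}}$ is exactly the cylinder set of those families whose $\bracket{\history}_{\agent}$-coordinate equals~$\control_{\agent}$; its probability under the product measure $\BtoPM{\BehavioralWStrategy}^{\player}_{\agent}$ is therefore the $\bracket{\history}_{\agent}$-marginal evaluated at~$\control_{\agent}$, namely $\BehavioralWStrategy^{\player}_{\agent}\conditionalySet{\control_{\agent}}{\bracket{\history}_{\agent}}$, which in turn equals $\BehavioralWStrategy^{\player}_{\agent}\conditionaly{\control_{\agent}}{\history}$ by~\eqref{eq:common_value_equivalence_class_rho}. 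This is precisely~\eqref{eq:BtoPM}, and the proposition follows.

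I do not expect a genuine obstacle here: the content is the elementary observation that a pure strategy of an agent is a choice of one decision per information atom, a behavioral strategy is one lottery per atom, and drawing all these lotteries independently realizes the behavioral strategy. The only point needing a little care is to set up the bijection $\WSTRATEGY_{\agent}\cong\prod_{\atom}\CONTROL_{\agent}$ cleanly, via Proposition~\ref{pr:measurability}, so that the cylinder-set-to-marginal computation is fully rigorous; this is where I would spend most of the writing.
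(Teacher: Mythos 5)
Your proposal is correct and follows essentially the same route as the paper's proof: the paper also identifies $\WSTRATEGY_{\agent}$ with $\CONTROL_{\agent}^{\crochet{\tribu{\Information}_{\agent}}}$ via the bijection furnished by Proposition~\ref{pr:measurability}, defines $\BtoPM{\BehavioralWStrategy}^{\player}_{\agent}$ as the (pushforward of the) product over atoms of the per-atom distributions $\BehavioralWStrategy^{\player}_{\agent}\conditionalySet{\cdot}{\atom_{\agent}}$, and verifies~\eqref{eq:BtoPM} by the same cylinder-set/marginal computation ending with~\eqref{eq:common_value_equivalence_class_rho}.
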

\begin{subequations}
  \begin{proof} 
    We consider a fixed agent~\( \agent\in\AGENT^{\player} \).

    On the one hand,
    by Proposition~\ref{pr:measurability}, we get that
    \begin{equation*}
      \wstrategy_{\agent} \in \WSTRATEGY_{\agent} \iff
      \exists \sequence{\control_{\agent}^{\atom_{\agent}}}{%
        \atom_{\agent} \in \crochet{\tribu{\Information}_{\agent}}} 
      \in 
      \CONTROL_{\agent}^{\crochet{\tribu{\Information}_{\agent}}}
      \eqsepv 
      \wstrategy_{\agent}\np{\history}=\control_{\agent}^{\atom_{\agent}}
      \eqsepv \forall \atom_{\agent} \in \crochet{\tribu{\Information}_{\agent}}
      \eqsepv \forall \history \in \atom_{\agent}
      \eqfinv
    \end{equation*}
where \( \CONTROL_{\agent}^{\crochet{\tribu{\Information}_{\agent}}} \) is the
set of mappings from~\( \crochet{\tribu{\Information}_{\agent}} \) to~\(
\CONTROL_{\agent} \).
Therefore, the following mapping is a bijection:
    \begin{equation}
      \Psi : \WSTRATEGY_{\agent} \to 
      \CONTROL_{\agent}^{\crochet{\tribu{\Information}_{\agent}}}
      \eqsepv
      \wstrategy_{\agent} \mapsto 
      \sequence{\wstrategy_{\agent}\np{\atom_{\agent}}}{%
        \atom_{\agent} \in \crochet{\tribu{\Information}_{\agent}}} 
      \eqfinp
      \label{eq:BtoPM_proof_bijection} 
    \end{equation}
    We denote the inverse bijection by
    \begin{equation}
      \Phi=\Psi^{-1} : 
      \CONTROL_{\agent}^{\crochet{\tribu{\Information}_{\agent}}}
      \to \WSTRATEGY_{\agent} 
      \eqfinp
      \label{eq:BtoPM_proof_bijection_inverse} 
    \end{equation}

    On the other hand, 
    by Item~\ref{it:behavioral_W-strategy_abstract}
    in Definition~\ref{de:behavioral_W-strategy}, each 
    \( \BehavioralWStrategy^{\player}_{\agent}
    \conditionalySet{ \cdot }{ \history } \) is 
    a probability on the finite set \( \CONTROL_{\agent} \),
    for any \( \history \in \HISTORY \).
    As the mapping \( \history \in \HISTORY \mapsto 
    \BehavioralWStrategy^{\player}_{\agent}
    \conditionalySet{ \cdot }{ \history } \) is 
    \( \tribu{\Information}_{\agent} \)-measurable,
    by Definition~\ref{de:behavioral_W-strategy}, 
    the notation \( \BehavioralWStrategy^{\player}_{\agent}
    \conditionalySet{ \cdot }{ \atom_{\agent} } \) 
    makes sense by~\eqref{eq:common_value_rho}.

    We equip the finite set 
    \( \CONTROL_{\agent}^{\crochet{\tribu{\Information}_{\agent}}} \)
    with the product probability
    \( \bigotimes_{ \atom_{\agent} \in \crochet{\tribu{\Information}_{\agent}} } 
    \BehavioralWStrategy^{\player}_{\agent}
    \conditionalySet{ \cdot }{ \atom_{\agent} } \),
    and we define the pushforward probability
    \begin{equation}
      \BtoPM{\BehavioralWStrategy}^{\player}_{\agent}
      = \Bp{ \bigotimes_{ \atom_{\agent} \in \crochet{\tribu{\Information}_{\agent}} } 
        \BehavioralWStrategy^{\player}_{\agent}
        \conditionalySet{ \cdot }{ \atom_{\agent} } }
      \circ \Phi^{-1} \eqsepv
      \label{eq:BtoPM_proof_probability}
    \end{equation}
    on the finite set~\( \WSTRATEGY_{\agent} \).
    Then, we calculate, for any \( \history \in \HISTORY \), 
    \begin{align*}
      \BtoPM{\BehavioralWStrategy}^{\player}_{\agent}
      &
        \Bp{\defset{ \wstrategy_{\agent}\in\WSTRATEGY_{\agent}}%
        {\wstrategy_{\agent}\np{\history}=\control_{\agent}}}
      \\
      &=  
          \Bp{ \bigotimes_{ \atom_{\agent} \in \crochet{\tribu{\Information}_{\agent}} } 
          \BehavioralWStrategy^{\player}_{\agent}
          \conditionalySet{ \cdot }{ \atom_{\agent} } }
          \Bp{ \Phi^{-1} \bp{ \defset{ \wstrategy_{\agent}\in\WSTRATEGY_{\agent}}%
          {\wstrategy_{\agent}\np{\history}=\control_{\agent}} }}
          \tag{by definition~\eqref{eq:BtoPM_proof_probability}}
      \\
      &=
          \Bp{ \bigotimes_{ \atom_{\agent} \in \crochet{\tribu{\Information}_{\agent}} } 
          \BehavioralWStrategy^{\player}_{\agent}
          \conditionalySet{ \cdot }{ \atom_{\agent} } }
          \Bp{ \Psi \bp{\defset{ \wstrategy_{\agent}\in\WSTRATEGY_{\agent}}%
          {\wstrategy_{\agent}\np{\history}=\control_{\agent}} }}
          \tag{as \( \Phi^{-1}=\Psi \) by~\eqref{eq:BtoPM_proof_bijection_inverse}}
      \\
      &=
          \Bp{ \bigotimes_{ \atom_{\agent} \in \crochet{\tribu{\Information}_{\agent}} } 
          \BehavioralWStrategy^{\player}_{\agent}
          \conditionalySet{ \cdot }{ \atom_{\agent} } }
          \Bp{ \{ \control_{\agent} \} \times
          \CONTROL_{\agent}^{ \crochet{\tribu{\Information}_{\agent}}\backslash \bracket{\history}_{\agent} }
          }
          \intertext{because, by definition of the mapping~$\Psi$
          in~\eqref{eq:BtoPM_proof_bijection},
          any \( \wstrategy_{\agent} \in
          \defset{ \wstrategy'_{\agent}\in\WSTRATEGY_{\agent}}%
          {\wstrategy'_{\agent}\np{\history}=\control_{\agent}} \) 
          takes the value~$\control_{\agent}$ 
          on the atom~\( \bracket{\history}_{\agent} \)
          (by definition of the set~\( \WSTRATEGY_{\agent} \)
          in Definition~\ref{de:W-strategy} and by~\eqref{eq:common_value_equivalence_class_rho}),
          and any possible value in~\( \CONTROL_{\agent} \) for all the 
          other atoms in \( \crochet{\tribu{\Information}_{\agent}}
          \backslash \bracket{\history}_{\agent} \) }
      &=  
          \BehavioralWStrategy^{\player}_{\agent}
          \conditionaly{ \control_{\agent} }{ \bracket{\history}_{\agent} }
          \times
          \prod_{ \atom_{\agent} \in \crochet{\tribu{\Information}_{\agent}}
          \backslash \bracket{\history}_{\agent} } 
          \BehavioralWStrategy^{\player}_{\agent}
          \conditionalySet{ \CONTROL_{\agent} }{ \atom_{\agent} }
          \tag{by definition of the product probability
          \( \bigotimes_{ \atom_{\agent} \in \crochet{\tribu{\Information}_{\agent}} } 
          \BehavioralWStrategy^{\player}_{\agent}
          \conditionalySet{ \cdot }{ \atom_{\agent} } \) }
      \\
      &=  
          \BehavioralWStrategy^{\player}_{\agent}
          \conditionaly{ \control_{\agent} }{ \bracket{\history}_{\agent} }
          \times
          \prod_{ \atom_{\agent} \in \crochet{\tribu{\Information}_{\agent}}
          \backslash \bracket{\history}_{\agent} } 
          1
          \tag{as \( \BehavioralWStrategy^{\player}_{\agent}
          \conditionalySet{ \CONTROL_{\agent} }{ \atom_{\agent} }=1 \)
          for all \( \atom_{\agent} \in \crochet{\tribu{\Information}_{\agent}} \)}
      \\
      &=  
          \BehavioralWStrategy^{\player}_{\agent}
          \conditionaly{ \control_{\agent} }{ \bracket{\history}_{\agent} }
      \\
      &=  
          \BehavioralWStrategy^{\player}_{\agent}
          \conditionaly{\control_{\agent}}{\history}
    \end{align*}
    as the function \( \history \in \HISTORY \mapsto 
    \BehavioralWStrategy^{\player}_{\agent}
    \conditionaly{\control_{\agent}}{\history} \) is 
    \( \tribu{\Information}_{\agent} \)-measurable,
    by Item~\ref{it:behavioral_W-strategy_abstract}
    in Definition~\ref{de:behavioral_W-strategy}, 
    and using~\eqref{eq:common_value_equivalence_class_rho}.
    \medskip

    This ends the proof. 
  \end{proof}
\end{subequations}

\section{Kuhn's equivalence theorem}
\label{Kuhn_Theorem}

Now, we are equipped to give, for games in intrinsic form, a statement and a proof of the 
celebrated Kuhn's equivalence theorem: when a player enjoys perfect recall, 
for any mixed W-strategy, there is an equivalent behavioral strategy.

\begin{figure}
  \centering
  \[
  \begin{tikzcd}[column sep="2cm",row sep="4cm",cells={nodes={draw=gray}},labels={font=\everymath\expandafter{\the\everymath\textstyle}}]
    & \substack{\textrm{\normalsize mixed} \\ \textrm{\normalsize W-strategies}}
    \arrow{dr}[description]{{\textrm{Proposition~\ref{pr:MtoB_PerfectRecall} under perfect recall}}}
    &
    \\
    \substack{\textrm{\normalsize product mixed} \\ \textrm{\normalsize W-strategies}}
    \arrow[ru,bend left=10,"\textrm{injection}" ]
    \arrow[rr,bend left=10,"\textrm{Proposition~\ref{pr:PMtoB}}"]
    &
    &
    \substack{\textrm{\normalsize behavioral} \\ \textrm{\normalsize W-strategies}}
    \arrow[ll,bend left=10,"\textrm{Proposition~\ref{pr:BtoPM}}"] 
  \end{tikzcd}
  \]
  \caption{\label{fig:randomization}Three Propositions that relate three notions of
    randomization of strategies}
\end{figure}
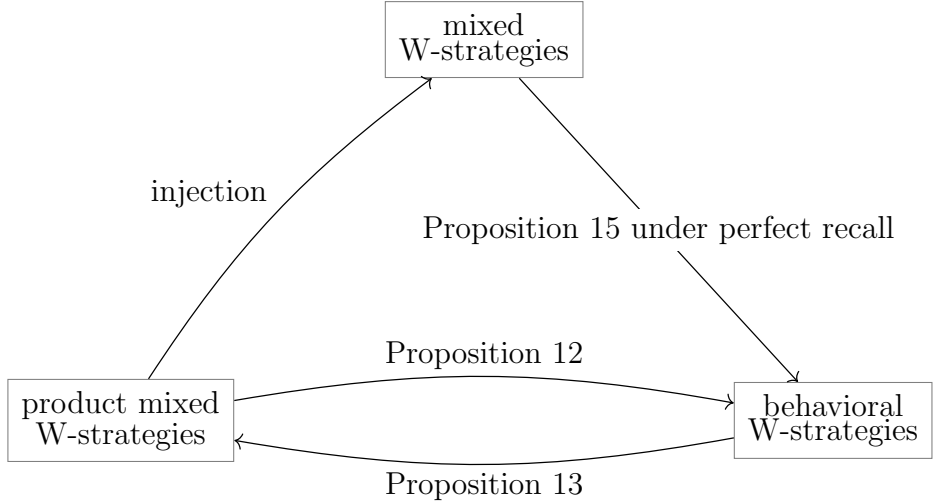

In this Section, we consider a causal finite W-game
(see Definition~\ref{de:W-game}), 
that is, the underlying W-model (as in Definition~\ref{de:W-model}) 
is causal (see Definition~\ref{de:causality}), 
with suitable configuration-ordering $\ordering: \HISTORY \to \ORDER^{\numAGENT}$.
In~\S\ref{Perfect_recall}, we introduce a formal definition of perfect recall 
in a causal finite game in intrinsic form. 
In~\S\ref{Preliminary_results} (Proposition~\ref{pr:MtoB_PerfectRecall}), we show that 
any mixed W-strategy induces a behavioral W-strategy under perfect recall
(see Figure~\ref{fig:randomization}).
Finally, in~\S\ref{Main_result} (Theorem~\ref{th:Kuhn}), we give a statement and
a proof of Kuhn's equivalence theorem for games in intrinsic form.

\subsection{Definition of perfect recall for causal W-games}
\label{Perfect_recall}

For any agent~\( \agent\in\AGENT \), 
we define the \emph{choice field}~\( \tribu{\Choice}_{\agent} 
\subset \tribu{\History} \) by
   \begin{equation}
\tribu{\Choice}_{\agent} = \tribu{\Control}_{\agent} \bigvee 
\tribu{\Information}_{\agent} \eqsepv \forall \agent \in \AGENT
\eqfinp
   \label{eq:ChoiceField}
   \end{equation}
Thus defined, the choice field of an agent contains both what the agent did
and what he knew when making the decision.

The following definition of perfect recall is new.
\begin{subequations}
\begin{definition}
We consider a causal finite W-game for which the underlying W-model 
is causal with the configuration-ordering $\ordering: \HISTORY \to \ORDER^{\numAGENT}$.

 We say that a player \( \player \in \PLAYER \) enjoys \emph{perfect recall} 
if, for any ordering \( \kappa \in \ORDER \)
such that~\( \LastElement{\kappa} \in \AGENT^{\player} \)
(that is, the last agent is an executive of the player), we have 
  \begin{equation}
\HISTORY_{\kappa}^{\ordering} \cap \History \in 
\tribu{\Information}_{\LastElement{\kappa}}
\eqsepv 
\forall \History \in \tribu{\Choice}_{\range{\FirstElements{\kappa}} \cap
  \AGENT^{\player}} 
\eqfinv    
   \label{eq:PerfectRecall}
  \end{equation}
where the subset~$\HISTORY_{\kappa}^{\ordering} \subset \HISTORY$ of configurations 
has been defined in~\eqref{eq:HISTORY_k_kappa},
the last agent~$\LastElement{\kappa}$ in~\eqref{LastElement_kappa},
the partial ordering~$\FirstElements{\kappa}$ in~\eqref{FirstElements_kappa},
the range~$\range{\FirstElements{\kappa}}$ in~\eqref{range_kappa},
and where
\begin{equation}
\tribu{\Choice}_{\range{\FirstElements{\kappa}} \cap \AGENT^{\player}} =
\bigvee \limits_{\substack{\agent \in \range{\FirstElements{\kappa}}\\
\agent \in \AGENT^{\player}}}
\tribu{\Choice}_{\agent} 
\eqfinv
   \label{eq:PastChoiceField}
\end{equation}
with the choice subfield \( \tribu{\Choice}_{\agent} \subset \tribu{\History} \) given by~\eqref{eq:ChoiceField}.
   \label{de:PerfectRecall}
\end{definition}
\end{subequations}
We interpret the above definition 
as follows.
A player enjoys perfect recall when any of her executive
agents --- when called to play as the last one in an ordering --- 
knows at least what did and knew those of the 
executive agents that are both his predecessors 
(in the range of the ordering under consideration)
and that are executive agents of the player.

\subsection{Mixed W-strategy induces behavioral W-strategy under perfect recall}
\label{Preliminary_results}

As a preparatory result for the proof of Kuhn's equivalence theorem, we show that 
any mixed W-strategy induces a behavioral W-strategy under perfect recall.

\begin{proposition}
We consider a causal finite W-game
(see Definition~\ref{de:W-game}),
for which the underlying W-model (see Definition~\ref{de:W-model}) 
is causal (see Definition~\ref{de:causality})
with the configuration-ordering $\ordering: \HISTORY \to \ORDER^{\numAGENT}$.

We consider a player \( \player \in \PLAYER \),
equipped with a mixed W-strategy \( \MixedStrategy^{\player} \in
\Delta\np{\WSTRATEGY^{\player}} \) and 
supposed to enjoy perfect recall (see Definition~\ref{de:PerfectRecall}),
and an agent~\( \agent\in\AGENT^{\player} \).

Then, for each agent \( \agent \in \AGENT^{\player} \), 
the following formula\footnote{%
With the convention that 
\( \MtoB{\MixedStrategy}^{\player}_{\agent}\np{ \{\control_{\agent}\} \mid
  \history}=0 \) if the denominator
is zero (in which case the numerator, which is smaller, is also zero),
and using the notations
\( \wstrategy=\sequence{\wstrategy_{\agent}}{\agent \in \AGENT^{\player}}
\in\WSTRATEGY^{\player} \), 
\eqref{eq:sub_history_BGENT} for \( \history_\BGENT \)
and~\eqref{eq:sub_wstrategy_BGENT} for \( \wstrategy_\BGENT \), 
with \( \BGENT = \range{\FirstElements{\kappa}} \cap \AGENT^{\player} \),
where
the last agent~$\LastElement{\kappa}$  has been defined in~\eqref{LastElement_kappa},
the partial ordering~$\FirstElements{\kappa}$ in~\eqref{FirstElements_kappa}
and the range~$\range{\FirstElements{\kappa}}$ in~\eqref{range_kappa}.
} 
\begin{equation}
  \begin{split}
  \MtoB{\MixedStrategy}^{\player}_{\agent}\conditionaly{\control_{\agent}}{\history}
= \\
\frac{ \MixedStrategy^{\player} \defset{ \wstrategy\in\WSTRATEGY^{\player} }{ 
\exists \kappa \in \ORDER,
\, \history\in\HISTORY_{\kappa}^{\ordering} ,
\, \LastElement{\kappa}=\agent,
\, \wstrategy_{\agent}\np{\history}=\control_{\agent},
\, \wstrategy_{ \range{\FirstElements{\kappa}} \cap \AGENT^{\player} }\np{\history} 
= \history_{\range{\FirstElements{\kappa}} \cap \AGENT^{\player}} } }%
{ \MixedStrategy^{\player} \defset{ \wstrategy\in\WSTRATEGY^{\player} }{ \exists \kappa \in \ORDER,
\, \history\in\HISTORY_{\kappa}^{\ordering} ,
\, \LastElement{\kappa}=\agent,
\, \wstrategy_{\range{\FirstElements{\kappa}} \cap \AGENT^{\player}}\np{\history} 
= \history_{\range{\FirstElements{\kappa}} \cap \AGENT^{\player}} } }    
  \end{split}
\label{eq:MtoB_PerfectRecall}
\end{equation}
defines an \( \tribu{\Information}_{\agent} \)-measurable stochastic kernel
\( \MtoB{\MixedStrategy}^{\player}_{\agent}\).
As a consequence, the family 
\( \MtoB{\MixedStrategy}^{\player} = 
\sequence{\MtoB{\MixedStrategy}^{\player}_{\agent}}{\agent \in \AGENT^{\player}} \),
is a behavioral W-strategy, as in Definition~\ref{de:behavioral_W-strategy}.
\label{pr:MtoB_PerfectRecall}
\end{proposition}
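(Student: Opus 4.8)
The plan is to fix an agent $\agent\in\AGENT^{\player}$, rewrite formula~\eqref{eq:MtoB_PerfectRecall} in a workable form, settle non-negativity and normalization, and finally establish the $\tribu{\Information}_{\agent}$-measurability, which is where perfect recall really enters. The key preliminary observation is that, for each configuration $\history\in\HISTORY$, the agent~$\agent$ occupies a unique position~$k$ in the total ordering $\ordering(\history)\in\ORDER^{\numAGENT}$ (total orderings being bijections onto $\AGENT$), so that $\kappa:=\cut_{k}\bp{\ordering(\history)}$ is the \emph{unique} ordering with $\history\in\HISTORY_{\kappa}^{\ordering}$ and $\LastElement{\kappa}=\agent$. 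Writing $\BGENT(\history)=\range{\FirstElements{\kappa}}\cap\AGENT^{\player}$ --- and noting $\agent\notin\BGENT(\history)$, since $\kappa$ is injective --- the existential quantifier in~\eqref{eq:MtoB_PerfectRecall} collapses: the numerator equals $\MixedStrategy^{\player}\bp{A(\history)}$ and the denominator equals $\MixedStrategy^{\player}\bp{B(\history)}$, where $A(\history)$ is the set of profiles $\wstrategy\in\WSTRATEGY^{\player}$ such that $\wstrategy_{\agent}\np{\history}=\control_{\agent}$ and $\wstrategy_{\bgent}\np{\history}=\history_{\bgent}$ for all $\bgent\in\BGENT(\history)$, and $B(\history)$ is defined the same way but without the constraint on coordinate~$\agent$. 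Non-negativity is then clear, and summing $\MixedStrategy^{\player}\bp{A(\history)}$ over $\control_{\agent}\in\CONTROL_{\agent}$ gives $\MixedStrategy^{\player}\bp{B(\history)}$, because the events $\defset{\wstrategy\in\WSTRATEGY^{\player}}{\wstrategy_{\agent}\np{\history}=\control_{\agent}}$ form a partition of $\WSTRATEGY^{\player}$; hence $\MtoB{\MixedStrategy}^{\player}_{\agent}\conditionalySet{\cdot}{\history}$ is a probability distribution on $\CONTROL_{\agent}$ whenever $\MixedStrategy^{\player}\bp{B(\history)}>0$, the degenerate case being covered by the footnote's convention.

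For the $\tribu{\Information}_{\agent}$-measurability, Proposition~\ref{pr:measurability} (specifically~\eqref{eq:strategy_atoms_rho}) reduces the task to proving that $\history'\sim_{\agent}\history''$ implies $\MtoB{\MixedStrategy}^{\player}_{\agent}\conditionaly{\control_{\agent}}{\history'}=\MtoB{\MixedStrategy}^{\player}_{\agent}\conditionaly{\control_{\agent}}{\history''}$, and I would obtain this from $A(\history')=A(\history'')$ and $B(\history')=B(\history'')$. First, perfect recall~\eqref{eq:PerfectRecall} applied with $\History=\HISTORY$ --- which belongs to $\tribu{\Choice}_{\range{\FirstElements{\kappa}}\cap\AGENT^{\player}}$, a $\sigma$-field, hence containing the whole space --- yields $\HISTORY_{\kappa}^{\ordering}\in\tribu{\Information}_{\agent}$ for every $\kappa$ with $\LastElement{\kappa}=\agent$. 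Therefore, if $\kappa$ is the ordering associated with $\history'$ as above, then the atom $\bracket{\history'}_{\agent}$ is entirely contained in $\HISTORY_{\kappa}^{\ordering}$, so $\history''\in\HISTORY_{\kappa}^{\ordering}$ too; by the uniqueness noted above, $\history'$ and $\history''$ are associated with the \emph{same} ordering $\kappa$, hence with the same set $\BGENT:=\BGENT(\history')=\BGENT(\history'')$. Moreover every $\wstrategy_{\agent}\in\WSTRATEGY_{\agent}$ is $\tribu{\Information}_{\agent}$-measurable, so $\wstrategy_{\agent}\np{\history'}=\wstrategy_{\agent}\np{\history''}$ by~\eqref{eq:common_value_equivalence_class_rho}, i.e.\ the coordinate-$\agent$ constraint defining $A(\history')$ is identical to the one defining $A(\history'')$.

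The hard part --- and the only place where perfect recall is used at full strength --- is to match, for each $\bgent\in\BGENT$, the coordinate-$\bgent$ constraints, i.e.\ to show that $\wstrategy_{\bgent}\np{\history'}=\history'_{\bgent}\iff\wstrategy_{\bgent}\np{\history''}=\history''_{\bgent}$ for every $\wstrategy_{\bgent}\in\WSTRATEGY_{\bgent}$. The idea is that $\defset{\history\in\HISTORY}{\wstrategy_{\bgent}\np{\history}=\history_{\bgent}}=\bigcup_{\choice\in\CONTROL_{\bgent}}\bp{\defset{\history}{\wstrategy_{\bgent}\np{\history}=\choice}\cap\defset{\history}{\history_{\bgent}=\choice}}$ belongs to $\tribu{\Information}_{\bgent}\vee\tribu{\Control}_{\bgent}=\tribu{\Choice}_{\bgent}$ --- using $\defset{\history}{\wstrategy_{\bgent}\np{\history}=\choice}\in\tribu{\Information}_{\bgent}$ (Definition~\ref{de:W-strategy}) and the definition~\eqref{eq:ChoiceField} of the choice field --- hence also to $\tribu{\Choice}_{\BGENT}$, since $\bgent\in\BGENT$. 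Perfect recall~\eqref{eq:PerfectRecall} then gives $E:=\HISTORY_{\kappa}^{\ordering}\cap\defset{\history}{\wstrategy_{\bgent}\np{\history}=\history_{\bgent}}\in\tribu{\Information}_{\agent}$, and since $\history',\history''\in\HISTORY_{\kappa}^{\ordering}$ and $\history'\sim_{\agent}\history''$, we get $\history'\in E\iff\history''\in E$, which is precisely the desired equivalence. It follows that $A(\history')=A(\history'')$ and $B(\history')=B(\history'')$, so the numerator and the denominator in~\eqref{eq:MtoB_PerfectRecall} take the same value at $\history'$ and at $\history''$; thus $\MtoB{\MixedStrategy}^{\player}_{\agent}$ is $\tribu{\Information}_{\agent}$-measurable, and together with non-negativity, normalization, and Definition~\ref{de:behavioral_W-strategy} (in the equivalent form of Item~\ref{it:behavioral_W-strategy}), this shows that $\MtoB{\MixedStrategy}^{\player}=\sequence{\MtoB{\MixedStrategy}^{\player}_{\agent}}{\agent\in\AGENT^{\player}}$ is a behavioral W-strategy. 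Intuitively, perfect recall is exactly what makes agent~$\agent$'s inability to tell $\history'$ from $\history''$ imply that the executive agents of player~$\player$ preceding $\agent$ produced the same decisions along $\history'$ and along $\history''$, which is what makes the conditioning in~\eqref{eq:MtoB_PerfectRecall} depend only on $\tribu{\Information}_{\agent}$.
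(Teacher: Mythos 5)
Your proof is correct, and its overall skeleton matches the paper's: you collapse the existential quantifier by observing that each configuration~\( \history \) determines a unique \( \kappa \) with \( \LastElement{\kappa}=\agent \) and \( \history\in\HISTORY_{\kappa}^{\ordering} \), you get non-negativity and normalization by summing over \( \control_{\agent} \) (the degenerate zero-denominator case being handled by the same convention the paper uses), and you reduce \( \tribu{\Information}_{\agent} \)-measurability to constancy on atoms, first using perfect recall~\eqref{eq:PerfectRecall} with \( \History=\HISTORY \) to show an atom of \( \tribu{\Information}_{\agent} \) cannot straddle two sets \( \HISTORY_{\kappa}^{\ordering} \). Where you genuinely diverge is the central step. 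The paper establishes two separate preliminary facts for \( \history'\sim_{\agent}\history'' \): recall of past decisions, \( \history'_{\range{\FirstElements{\kappa_0}}\cap\AGENT^{\player}}=\history''_{\range{\FirstElements{\kappa_0}}\cap\AGENT^{\player}} \) (perfect recall applied to the decision cylinder \( \History' \in \tribu{\Control}_{\range{\FirstElements{\kappa_0}}\cap\AGENT^{\player}} \), see~\eqref{eq:proof_recall_decisions}), and recall of past information, namely that \( \history',\history'' \) lie in the same atoms of \( \tribu{\Information}_{\bgent} \) for each predecessor \( \bgent \) (perfect recall applied to those atoms, see~\eqref{eq:proof_recall_information}); it then chains \( \wstrategy_{\bgent}\np{\history''}=\wstrategy_{\bgent}\np{\history'}=\history'_{\bgent}=\history''_{\bgent} \). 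You instead apply perfect recall once, per predecessor \( \bgent \) and pure strategy \( \wstrategy_{\bgent} \), to the single ``diagonal'' event \( \defset{\history}{\wstrategy_{\bgent}\np{\history}=\history_{\bgent}} \), which you correctly place in \( \tribu{\Choice}_{\bgent}=\tribu{\Control}_{\bgent}\vee\tribu{\Information}_{\bgent} \) by~\eqref{eq:ChoiceField} and hence in \( \tribu{\Choice}_{\range{\FirstElements{\kappa}}\cap\AGENT^{\player}} \) by~\eqref{eq:PastChoiceField}; then \( \HISTORY_{\kappa}^{\ordering}\cap\defset{\history}{\wstrategy_{\bgent}\np{\history}=\history_{\bgent}}\in\tribu{\Information}_{\agent} \), and~\eqref{eq:atom_property} gives the needed equivalence directly. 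This is a slicker execution: it exploits exactly what the choice field was designed for and yields the equality of the constraint sets in one stroke, at the price of quantifying over all pure strategies \( \wstrategy_{\bgent} \); the paper's two-lemma route is slightly longer but isolates the two interpretable ingredients (``recalls what predecessors did'' and ``recalls what predecessors knew'') that also illuminate the meaning of Definition~\ref{de:PerfectRecall}.
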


\begin{subequations}
\begin{proof}
We consider a player \( \player \in \PLAYER \),
equipped with a mixed W-strategy \( \MixedStrategy^{\player} \in
\Delta\np{\WSTRATEGY^{\player}} \) and 
supposed to enjoy perfect recall as in Definition~\ref{de:PerfectRecall},
and an agent~\( \agent\in\AGENT^{\player} \).
\medskip

By~\eqref{eq:MtoB_PerfectRecall}, it is easy to see that, 
for any \( \history \in \HISTORY \),
we have \( \MtoB{\MixedStrategy}^{\player}_{\agent}
\conditionaly{\control_{\agent}}{\history} \geq 0 \),
\( \forall \control_{\agent}\in\CONTROL_{\agent} \),
and
\(  \sum_{ \control_{\agent}\in\CONTROL_{\agent} } 
\MtoB{\MixedStrategy}^{\player}_{\agent}
\conditionaly{\control_{\agent}}{\history} = 1 \).
Therefore, by Item~\ref{it:behavioral_W-strategy_abstract}
in Definition~\ref{de:behavioral_W-strategy}, 
there remains to prove that 
the function \( \history \in \HISTORY \mapsto 
\MtoB{\MixedStrategy}^{\player}_{\agent}
\conditionaly{\control_{\agent}}{\history} \) is 
\( \tribu{\Information}_{\agent} \)-measurable.
The proof is in several steps.
\medskip

\noindent$\bullet$
As the agent~\( \agent\in\AGENT \) is fixed,
it is easily seen that the family \( \sequence{ \HISTORY_{\kappa}^{\ordering} }{
\kappa \in \ORDER, \, \LastElement{\kappa}=\agent } \),
where the subset~$\HISTORY_{\kappa}^{\ordering} \subset \HISTORY$ of configurations 
has been defined in~\eqref{eq:HISTORY_k_kappa}, is made of
(possibly empty) disjoint sets whose union is~\( \HISTORY \).
Indeed, on the one hand, for any \( \history\in \HISTORY \), we have that 
 \( \history\in \HISTORY_{\kappa_0}^{\ordering} \) where 
\( \kappa_0=\psi_k\np{\totalordering} \), with
\( \totalordering=\varphi\np{\history} \) 
and $k$ the unique integer such that \( \totalordering(k)=\agent \).
On the other hand, if we had \( \HISTORY_{\kappa_1}^{\ordering}
\cap \HISTORY_{\kappa_2}^{\ordering} \neq \emptyset \),
with \( \LastElement{\kappa_1}=\LastElement{\kappa_2}=\agent \), 
then \( \history\in \HISTORY_{\kappa_1}^{\ordering}
\cap \HISTORY_{\kappa_2}^{\ordering} \) would be such that
\( \kappa_1=\kappa_2=\psi_k\np{\totalordering} \), with
\( \totalordering=\varphi\np{\history} \) 
and $k$ the unique integer such that \( \totalordering(k)=\agent \).
Thus, \( \kappa_1 \neq \kappa_2 \implies \HISTORY_{\kappa_1}^{\ordering}
\cap \HISTORY_{\kappa_2}^{\ordering} = \emptyset \).
\medskip

\noindent$\bullet$
As the family \( \sequence{ \HISTORY_{\kappa}^{\ordering} }{
    \kappa \in \ORDER, \, \LastElement{\kappa}=\agent } \) is made of
    disjoint sets whose union is~\( \HISTORY \), 
we rewrite~\eqref{eq:MtoB_PerfectRecall} as
\begin{align}
  \MtoB{\MixedStrategy}^{\player}_{\agent}
  &\conditionaly{\control_{\agent}}{\history}
    \nonumber
  \\
  &=\frac{{\displaystyle \mathop{\sum}_{\substack{ \kappa \in \ORDER\\\LastElement{\kappa}=\agent}}}
    \MixedStrategy^{\player} \defset{ \wstrategy\in\WSTRATEGY^{\player} }{ 
    \history\in\HISTORY_{\kappa}^{\ordering} 
    \eqsepv \wstrategy_{\agent}\np{\history}=\control_{\agent}
    \eqsepv \wstrategy_{\range{\FirstElements{\kappa}} \cap \AGENT^{\player}}\np{\history} 
    = \history_{\range{\FirstElements{\kappa}} \cap \AGENT^{\player}} } }%
    {{\displaystyle \sum_{ \kappa \in \ORDER, \LastElement{\kappa}=\agent}} 
    \MixedStrategy^{\player} \defset{ \wstrategy\in\WSTRATEGY^{\player} }{ 
    \history\in\HISTORY_{\kappa}^{\ordering} 
    \eqsepv \wstrategy_{\range{\FirstElements{\kappa}} \cap \AGENT^{\player}}\np{\history} 
    = \history_{\range{\FirstElements{\kappa}} \cap \AGENT^{\player}} } }
    \nonumber
\\
  &=
    \frac{ \sum_{ \kappa \in \ORDER, \LastElement{\kappa}=\agent}
    \MixedStrategy^{\player}\ba{\Phi_\kappa\np{\history,\control_{\agent}}} }%
    { \sum_{ \kappa \in \ORDER, \LastElement{\kappa}=\agent} 
    \sum_{  \control_{\agent}\in\CONTROL_{\agent} }
    \MixedStrategy^{\player}\ba{\Phi_\kappa\np{\history,\control_{\agent}}} }
    \eqfinv
    \nonumber
    \intertext{where, for any \( \history\in\HISTORY \),
    \( \control_{\agent}\in\CONTROL_{\agent} \) and \( \kappa \in \ORDER \) such that 
    \( \LastElement{\kappa}=\agent \), we have defined the following subset of strategies}
    \Phi_\kappa\np{\history,\control_{\agent}} 
  &=
    \label{eq:proof_Phi_kappa}
  \\
  &
    \defset{ \wstrategy\in\WSTRATEGY^{\player} }{
    \history\in\HISTORY_{\kappa}^{\ordering} 
    \eqsepv \wstrategy_{\agent}\np{\history}=\control_{\agent}
    \eqsepv \wstrategy_{\range{\FirstElements{\kappa}} \cap \AGENT^{\player}}\np{\history} 
    = \history_{\range{\FirstElements{\kappa}} \cap \AGENT^{\player}} } 
    \subset \WSTRATEGY^{\player} 
    \eqfinp
    \nonumber
\end{align}
%
We will prove, in three steps, that \( \Phi_{\kappa}\np{\history,\control_{\agent}} \) in~\eqref{eq:proof_Phi_kappa} 
takes the same (set) value for any \( \history \in \atom_{\agent} \),
where $\atom_{\agent}$ is an atom of~\( \tribu{\Information}_{\agent} \).
\medskip

\noindent$\bullet$
Let $\atom_{\agent} \subset \HISTORY$ be an atom of~\( \tribu{\Information}_{\agent} \).
We prove that there exists a unique \( \kappa_0 \in \ORDER \) 
such that \( \LastElement{\kappa_0}=\agent \)
and \( \atom_{\agent} \subset \HISTORY_{\kappa_0}^{\ordering} \),
that is, we prove that 
\begin{equation}
\atom_{\agent} \in \crochet{ \tribu{\Information}_{\agent} }
\implies \exists ! \, \kappa_0 \in \ORDER \eqsepv
\LastElement{\kappa_0}=\agent \eqsepv 
\atom_{\agent} \subset \HISTORY_{\kappa_0}^{\ordering}
\eqfinp
  \label{eq:proof_unique_kappa_0}
\end{equation}

First, we show that, for any \( \kappa \in \ORDER \) such that 
\( \LastElement{\kappa}=\agent \), we have 
either \( \atom_{\agent} \subset \HISTORY_{\kappa}^{\ordering} \)
or \( \atom_{\agent} \cap \HISTORY_{\kappa}^{\ordering} = \emptyset \).
Indeed, by~\eqref{eq:PerfectRecall} with 
\( \History =  \HISTORY \in \tribu{\Choice}_{\range{\FirstElements{\kappa}} \cap \AGENT^{\player}}\), 
we obtain that \( \HISTORY_{\kappa}^{\ordering} 
\in \tribu{\Information}_{\agent} \).
Therefore, either \( \atom_{\agent} \cap \HISTORY_{\kappa}^{\ordering} = \emptyset \) 
or \( \atom_{\agent} \subset \HISTORY_{\kappa}^{\ordering} \)
by~\eqref{eq:atom_property} since
$\atom_{\agent}$ is an atom of~\( \tribu{\Information}_{\agent} \).
Second, we have seen that the family \( \sequence{ \HISTORY_{\kappa}^{\ordering} }{
\kappa \in \ORDER, \, \LastElement{\kappa}=\agent } \) is made of
disjoint sets whose union is~\( \HISTORY \).

By combining both results, we conclude that there exists a unique \( \kappa_0 \in \ORDER \) 
such that \( \LastElement{\kappa_0}=\agent \)
and \( \atom_{\agent} \subset \HISTORY_{\kappa_0}^{\ordering} \).
\medskip

\noindent$\bullet$
As a consequence of~\eqref{eq:proof_unique_kappa_0}, we have that 
\( \Phi_\kappa\np{\history,\control_{\agent}} =\emptyset \),
for any \( \history\in \atom_{\agent} \), and 
for any \( \kappa \in \ORDER \) such that \( \kappa \neq \kappa_0 \)
and  \( \LastElement{\kappa}=\agent \).
There only remains to prove that \( \Phi_{\kappa_0}\np{\history,\control_{\agent}} \) 
in~\eqref{eq:proof_Phi_kappa} takes the same (set) value
for any \( \history \in \atom_{\agent} \). For this purpose, 
we consider $\history', \history'' \in \HISTORY $ which belong to 
the atom~$\atom_{\agent}  \in \crochet{\tribu{\Information}_{\agent}}$, that is,
\( \{\history', \history''\} \subset \atom_{\agent} \),
and we establish two preliminary results.

First, we prove that 
\( \history'_{\range{\FirstElements{\kappa_0}} \cap \AGENT^{\player}}=
\history''_{\range{\FirstElements{\kappa_0}} \cap \AGENT^{\player}} \).
For this purpose, we define the subset 
\( \History'=\defset{ \history \in \HISTORY }{
\history_{\range{\FirstElements{\kappa_0}} \cap \AGENT^{\player}}
=\history'_{\range{\FirstElements{\kappa_0}} \cap \AGENT^{\player}} } 
\subset \HISTORY \) and we show in two steps that 
\( \history'' \in \History' \), hence that 
\( \history'_{\range{\FirstElements{\kappa_0}} \cap \AGENT^{\player}}=
\history''_{\range{\FirstElements{\kappa_0}} \cap \AGENT^{\player}} \), that is,
we show that 
\begin{equation}
\{\history', \history''\} \subset \atom_{\agent} 
\implies
 \history'_{\range{\FirstElements{\kappa_0}} \cap \AGENT^{\player}}=
\history''_{\range{\FirstElements{\kappa_0}} \cap \AGENT^{\player}}
\eqfinp
    \label{eq:proof_recall_decisions}
\end{equation}

\begin{description}
\item[-]
We show that 
\( \HISTORY_{\kappa_0}^{\ordering} \cap \History' \in \tribu{\Information}_{\agent} \).
By definition of the field~\( \tribu{\Control}_{\range{\FirstElements{\kappa_0}}
  \cap \AGENT^{\player}} \) in~\eqref{eq:sub_control_field_BGENT} 
with \( \BGENT = \range{\FirstElements{\kappa}} \cap \AGENT^{\player} \),
and because each field~\( \tribu{\Control}_{\bgent} \), for
\( \bgent \in \range{\FirstElements{\kappa_0}} \cap \AGENT^{\player} \), is complete,
hence has the singletons for atoms, 
we have that \( \History' \in \tribu{\Control}_{\range{\FirstElements{\kappa_0}} \cap \AGENT^{\player}} \).
As \( \tribu{\Control}_{\range{\FirstElements{\kappa_0}} \cap \AGENT^{\player}} 
\subset 
\tribu{\Choice}_{\range{\FirstElements{\kappa_0}} \cap \AGENT^{\player}} \)
by~\eqref{eq:PastChoiceField}, we use the perfect recall
assumption~\eqref{eq:PerfectRecall}
with \( \History = \History' \),
and obtain that \( \HISTORY_{\kappa_0}^{\ordering} \cap \History' 
\in \tribu{\Information}_{\LastElement{\kappa_0}}=\tribu{\Information}_{\agent}
\) since \( \LastElement{\kappa_0}=\agent \).
\item[-]
As \( \HISTORY_{\kappa_0}^{\ordering} \cap \History' \in
\tribu{\Information}_{\agent} \)
and $\atom_{\agent}$ is an atom of~\( \tribu{\Information}_{\agent} \),
we have either 
\( \atom_{\agent} \cap \HISTORY_{\kappa_0}^{\ordering} \cap \History' =\emptyset \)
or \( \atom_{\agent} \subset \HISTORY_{\kappa_0}^{\ordering} \cap \History' \)
by~\eqref{eq:atom_property}.
Now, as \( \history' \in \History' \) and 
\( \history' \in \atom_{\agent} \subset \HISTORY_{\kappa_0}^{\ordering} \),
we get that 
\( \atom_{\agent} \cap \HISTORY_{\kappa_0}^{\ordering} \cap \History' \neq \emptyset \),
and therefore \( \atom_{\agent} \subset \HISTORY_{\kappa_0}^{\ordering} \cap \History' \).
Since \( \history'' \in \atom_{\agent} \), we deduce that 
\( \history'' \in \History' \), hence that 
\( \history'_{\range{\FirstElements{\kappa_0}} \cap \AGENT^{\player}}=
\history''_{\range{\FirstElements{\kappa_0}} \cap \AGENT^{\player}} \).
\end{description}

Second, we prove that, for any \( \bgent \in \range{\FirstElements{\kappa_0}} \cap \AGENT^{\player} \)
and any atom $\atom_\bgent$ of~\( \tribu{\Information}_{\bgent} \),
we have either 
\( \{\history', \history''\} \cap \atom_\bgent =\emptyset\)
or \( \{\history', \history''\} \subset \atom_\bgent \), that is, 
we prove that
\begin{equation}
  \begin{split}
\Bp{ \bgent \in \range{\FirstElements{\kappa_0}} \cap \AGENT^{\player} 
\text{ and }
\atom_\bgent \in \crochet{ \tribu{\Information}_{\bgent} } }
\implies \\
\Bp{ \{\history', \history''\} \cap \atom_\bgent =\emptyset
\text{ or }
\{\history', \history''\} \subset \atom_\bgent }
\eqfinp
  \end{split}
    \label{eq:proof_recall_information}
\end{equation}
For this purpose, we consider 
\( \bgent \in \range{\FirstElements{\kappa_0}} \cap \AGENT^{\player} \)
and \( \atom_\bgent \in \crochet{ \tribu{\Information}_{\bgent} } \).

As \( \tribu{\Information}_{\bgent} \subset 
\tribu{\Choice}_{\range{\FirstElements{\kappa_0}} \cap \AGENT^{\player}} \)
by~\eqref{eq:PastChoiceField}, we use the perfect recall
assumption~\eqref{eq:PerfectRecall}
with \( \History = \atom_\bgent \),
and obtain that \( \HISTORY_{\kappa_0}^{\ordering} \cap \atom_\bgent
\in \tribu{\Information}_{\LastElement{\kappa_0}}=\tribu{\Information}_{\agent}
\) since \( \LastElement{\kappa_0}=\agent \) by~\eqref{eq:proof_unique_kappa_0}.
As a consequence, as \( \HISTORY_{\kappa_0}^{\ordering} \cap \atom_\bgent
\in \tribu{\Information}_{\agent} \)
and $\atom_{\agent}$ is an atom of~\( \tribu{\Information}_{\agent} \),
we have either 
\( \atom_{\agent} \cap \HISTORY_{\kappa_0}^{\ordering} \cap \atom_\bgent =\emptyset \)
or \( \atom_{\agent} \subset \HISTORY_{\kappa_0}^{\ordering} \cap \atom_\bgent \)
by~\eqref{eq:atom_property}.
Since \( \{\history', \history''\} \subset \atom_{\agent} \) by assumption,
where \( \atom_{\agent} \subset \HISTORY_{\kappa_0}^{\ordering} \)
by~\eqref{eq:proof_unique_kappa_0}, we conclude
that either \( \{\history', \history''\} \subset \atom_\bgent \)
or \( \{\history', \history''\} \cap \atom_\bgent =\emptyset\).
\medskip

\noindent$\bullet$
We consider an atom~$\atom_{\agent} \in \crochet{\tribu{\Information}_{\agent}}$,
and we finally prove that \( \Phi_{\kappa_0}\np{\history,\control_{\agent}} \) 
in~\eqref{eq:proof_Phi_kappa} takes the same (set) value
for any \( \history \in \atom_{\agent} \).
For this purpose, we consider $\history'$ and $\history''$ which belong to 
the atom~$\atom_{\agent}$ of~\( \tribu{\Information}_{\agent} \), that is,
\( \{\history', \history''\} \subset \atom_{\agent} \),
and we show that 
\( \wstrategy\in\Phi_{\kappa_0}\np{\history',\control_{\agent}} \implies
\wstrategy\in\Phi_{\kappa_0}\np{\history'',\control_{\agent}} \).

Thus, we take \( \wstrategy\in\Phi_{\kappa_0}\np{\history',\control_{\agent}} \)
in~\eqref{eq:proof_Phi_kappa} --- that is, \(\wstrategy\in\WSTRATEGY^{\player} \) is such that 
\( \history'\in\HISTORY_{\kappa_0}^{\ordering} \), 
\( \wstrategy_{\agent}\np{\history'}=\control_{\agent} \) 
and \( \wstrategy_{\range{\FirstElements{\kappa_0}} \cap \AGENT^{\player}}\np{\history'} 
= \history'_{\range{\FirstElements{\kappa_0}} \cap \AGENT^{\player}} \) ---
and we are going to prove in three steps that 
\( \history''\in\HISTORY_{\kappa_0}^{\ordering} \), 
\( \wstrategy_{\agent}\np{\history''}=\control_{\agent} \) 
and \( \wstrategy_{\range{\FirstElements{\kappa_0}} \cap \AGENT^{\player}}\np{\history''} 
= \history''_{\range{\FirstElements{\kappa_0}} \cap \AGENT^{\player}} \).
\begin{description}
\item[-]
First, we have that 
\( \history''\in\HISTORY_{\kappa_0}^{\ordering} \)
since 
\( \{\history', \history''\} \subset \atom_{\agent} \) by assumption,
and \( \atom_{\agent} \subset \HISTORY_{\kappa_0}^{\ordering} \) 
by~\eqref{eq:proof_unique_kappa_0}.
\item[-]
Second, since \( \{\history', \history''\} \subset \atom_{\agent} \) by assumption
and since the strategy \( \wstrategy_{\agent} \) 
is \( \tribu{\Information}_{\agent} \)-measurable, we have that
\( \wstrategy_{\agent}\np{\history'}=\wstrategy_{\agent}\np{\history''} \)
by~\eqref{eq:strategy_atoms_rho}, 
hence \( \wstrategy_{\agent}\np{\history''}=\control_{\agent} \) since
\( \wstrategy_{\agent}\np{\history'}=\control_{\agent} \) by assumption.
\item[-]
Third, we have shown that, for any \( \bgent \in \range{\FirstElements{\kappa_0}} \cap \AGENT^{\player} \)
and any atom $\atom_\bgent$ of~\( \tribu{\Information}_{\bgent} \),
we have either \( \{\history', \history''\} \subset \atom_\bgent \)
or \( \{\history', \history''\} \cap \atom_\bgent =\emptyset\)
by~\eqref{eq:proof_recall_information}.
As a consequence, the pair \( \{\history', \history''\} \)
is included in one of the atoms that make 
the partition~\( \crochet{\tribu{\Information}_{\bgent}} \).
Therefore, we obtain that 
\( \wstrategy_{\bgent}\np{\history'} =
\wstrategy_{\bgent}\np{\history''} \) by~\eqref{eq:strategy_atoms_rho},
hence \( \wstrategy_{\range{\FirstElements{\kappa_0}} \cap \AGENT^{\player}}\np{\history'} =
\wstrategy_{\range{\FirstElements{\kappa_0}} \cap \AGENT^{\player}}\np{\history''}
\), using the notation~\eqref{eq:sub_wstrategy_BGENT} for \( \wstrategy_\BGENT \), 
with \( \BGENT = \range{\FirstElements{\kappa}} \cap \AGENT^{\player} \).
As we had obtained in~\eqref{eq:proof_recall_decisions}
that \( \history'_{\range{\FirstElements{\kappa_0}} \cap \AGENT^{\player}}=
\history''_{\range{\FirstElements{\kappa_0}} \cap \AGENT^{\player}} \), we conclude that 
\( \wstrategy_{\range{\FirstElements{\kappa_0}} \cap \AGENT^{\player}}\np{\history''} =
\wstrategy_{\range{\FirstElements{\kappa_0}} \cap \AGENT^{\player}}\np{\history'} =
\history'_{\range{\FirstElements{\kappa_0}} \cap \AGENT^{\player}}=
\history''_{\range{\FirstElements{\kappa_0}} \cap \AGENT^{\player}} \).
\end{description}
As a consequence, we have just proved that
\( \wstrategy\in\Phi_{\kappa_0}\np{\history'',\control_{\agent}} \),
hence that 
\( \Phi_{\kappa_0}\np{\history',\control_{\agent}} =
\Phi_{\kappa_0}\np{\history'',\control_{\agent}} \) 
whenever 
\( \{\history', \history''\} \subset \atom_{\agent} \).
\medskip

\noindent$\bullet$
Let $\atom_{\agent}$ be an atom of~\( \tribu{\Information}_{\agent} \).
Finally, since \( \Phi_{\kappa}\np{\history,\control_{\agent}} \) 
in~\eqref{eq:proof_Phi_kappa} takes the same (set) value
for any \( \history \in \atom_{\agent} \), 
the expression~\eqref{eq:MtoB_PerfectRecall} takes the same value
for any \( \history \in \atom_{\agent} \), and thus the function
\( \history \in \HISTORY \mapsto 
\MtoB{\MixedStrategy}^{\player}_{\agent}
\conditionaly{\control_{\agent}}{\history} \) is 
\( \tribu{\Information}_{\agent} \)-measurable.
\medskip

This ends the proof.
\end{proof}
\end{subequations}

\subsection{Kuhn's equivalence theorem for causal finite games in intrinsic form}
\label{Main_result}

Finally, we give a statement and a proof of Kuhn's equivalence theorem for games in intrinsic form. 

\begin{theorem}
We consider a causal finite W-game
(see Definition~\ref{de:W-game}),
and a player \( \player \in \PLAYER \) 
supposed to enjoy perfect recall (see Definition~\ref{de:PerfectRecall}).

Then, for any mixed W-strategy \( \MixedStrategy^{\player} \in
\Delta\np{\WSTRATEGY^{\player}} \),
there exists a product-mixed W-strategy
\( \ProductMixedStrategy^{\player}=
\sequence{ \ProductMixedStrategy^{\player}_{\agent}}{\agent \in
  \AGENT^{\player}}  \in \prod_{\agent \in \AGENT^{\player}} \Delta\np{\WSTRATEGY_{\agent}} \), 
as in Definition~\ref{de:product-mixed_W-strategy}, 
such that\footnote{%
See Footnote~\ref{ft:product-mixed_W-strategy}
for the abuse of notation \( \ProductMixedStrategy^{\player}=
\otimes_{\agent\in\AGENT^{\player}}\ProductMixedStrategy^{\player}_{\agent} \).}
\begin{equation}
 \QQ^{\omega}_{\couple{{\MixedStrategy}^{-\player}}{{\MixedStrategy}^{\player}}} 
= \QQ^{\omega}_{\couple{{\MixedStrategy}^{-\player}}{{\ProductMixedStrategy}^{\player}}}
\eqsepv \forall {\MixedStrategy}^{-\player} 
\in \prod_{\player'\neq \player} \Delta\bp{ \WSTRATEGY^{\player'} } 
\eqsepv \forall \omega\in\Omega
\eqfinv
\end{equation}
where the probability distribution \( \QQ^{\omega}_{\MixedStrategy} 
\in \Delta\bp{\prod_{\bgent \in \AGENT} \CONTROL_{\bgent}} \)
has been defined in~\eqref{eq:push_forward_probability}.
%
\label{th:Kuhn}
\end{theorem}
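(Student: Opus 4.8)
The plan is to obtain the required product-mixed W-strategy by routing $\MixedStrategy^{\player}$ first into a behavioral W-strategy and then into a product-mixed one, and afterwards to prove equality of the two outcome distributions through a telescoping (chain-rule) computation in which perfect recall plays the decisive role. \textbf{Construction.} Since player~$\player$ enjoys perfect recall, Proposition~\ref{pr:MtoB_PerfectRecall} attaches to $\MixedStrategy^{\player}$ the behavioral W-strategy $\MtoB{\MixedStrategy}^{\player}$ of formula~\eqref{eq:MtoB_PerfectRecall}; then Proposition~\ref{pr:BtoPM} applied to $\MtoB{\MixedStrategy}^{\player}$ yields a product-mixed W-strategy $\ProductMixedStrategy^{\player}=\sequence{\ProductMixedStrategy^{\player}_{\agent}}{\agent\in\AGENT^{\player}}$ which, by property~\eqref{eq:BtoPM}, satisfies
\[
\ProductMixedStrategy^{\player}_{\agent}\Bp{\defset{\wstrategy_{\agent}\in\WSTRATEGY_{\agent}}{\wstrategy_{\agent}\np{\history}=\control_{\agent}}}=\MtoB{\MixedStrategy}^{\player}_{\agent}\conditionaly{\control_{\agent}}{\history}
\eqsepv\forall\agent\in\AGENT^{\player}\eqsepv\forall\control_{\agent}\in\CONTROL_{\agent}\eqsepv\forall\history\in\HISTORY\eqfinp
\]
This $\ProductMixedStrategy^{\player}$ is the announced strategy, and what remains is to verify the identity of pushforward probabilities.

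\textbf{Reduction to an event probability for player~$\player$.} Since the W-game is causal, hence solvable, a decision profile is the (unique) solution of the closed-loop equations~\eqref{eq:solution_map_IFF} for a pure profile $\wstrategy$ exactly when it is a solution; hence, for every $\omega\in\Omega$ and every $\sequence{\control_{\bgent}}{\bgent\in\AGENT}$, writing $\history=\bp{\omega,\sequence{\control_{\bgent}}{\bgent\in\AGENT}}\in\HISTORY$ (so that $\history_{\agent}=\control_{\agent}$), the preimage $\ReducedSolutionMap\np{\omega,\cdot}^{-1}\bp{\sequence{\control_{\bgent}}{\bgent\in\AGENT}}$ is the product set $\prod_{\player'\in\PLAYER}E_{\player'}(\history)$ with $E_{\player'}(\history)=\defset{\wstrategy\in\WSTRATEGY^{\player'}}{\wstrategy_{\agent}\np{\history}=\history_{\agent},\,\forall\agent\in\AGENT^{\player'}}$, and therefore $\QQ^{\omega}_{\MixedStrategy}\bp{\sequence{\control_{\bgent}}{\bgent\in\AGENT}}=\prod_{\player'\in\PLAYER}\MixedStrategy^{\player'}\bp{E_{\player'}(\history)}$ --- this is the factorization displayed right after the definition~\eqref{eq:push_forward_probability_b} of $\QQ^{\omega}_{\MixedStrategy}$. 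The factors indexed by $\player'\neq\player$ do not involve player~$\player$'s randomization, so it is enough --- and this simultaneously disposes of the quantifier over $\MixedStrategy^{-\player}$ --- to prove that, for every $\history\in\HISTORY$, the mixed W-strategy $\MixedStrategy^{\player}$ and the product-mixed W-strategy $\ProductMixedStrategy^{\player}$, viewed as the mixed W-strategy $\bigotimes_{\agent\in\AGENT^{\player}}\ProductMixedStrategy^{\player}_{\agent}$, give $E_{\player}(\history)$ the same probability.

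\textbf{The telescoping step.} Fixing $\history\in\HISTORY$, I would enumerate $\AGENT^{\player}=\{\agent_1,\ldots,\agent_m\}$ in the order in which these agents occur in the total ordering $\ordering(\history)\in\ORDER^{\numAGENT}$. On the product-mixed side, $E_{\player}(\history)$ is a product set and $\ProductMixedStrategy^{\player}$ a product probability, so the construction above gives $\ProductMixedStrategy^{\player}\bp{E_{\player}(\history)}=\prod_{\agent\in\AGENT^{\player}}\ProductMixedStrategy^{\player}_{\agent}\bp{\defset{\wstrategy_{\agent}\in\WSTRATEGY_{\agent}}{\wstrategy_{\agent}\np{\history}=\history_{\agent}}}=\prod_{\agent\in\AGENT^{\player}}\MtoB{\MixedStrategy}^{\player}_{\agent}\conditionaly{\history_{\agent}}{\history}$. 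On the mixed side, I would telescope $\MixedStrategy^{\player}\bp{E_{\player}(\history)}$ as the product over $i\in\ic{1,m}$ of the conditional probabilities $\MixedStrategy^{\player}\bp{\wstrategy_{\agent_i}\np{\history}=\history_{\agent_i}\mid\wstrategy_{\agent_j}\np{\history}=\history_{\agent_j},\,\forall j<i}$, and then identify each of them with $\MtoB{\MixedStrategy}^{\player}_{\agent_i}\conditionaly{\history_{\agent_i}}{\history}$: in~\eqref{eq:MtoB_PerfectRecall} the existential over $\kappa$ selects the single ordering $\kappa=\cut_{k}\bp{\ordering(\history)}$ with $k$ the rank of $\agent_i$ in $\ordering(\history)$ (the uniqueness being the one established at the beginning of the proof of Proposition~\ref{pr:MtoB_PerfectRecall}), and for that $\kappa$ one has $\range{\FirstElements{\kappa}}\cap\AGENT^{\player}=\{\agent_1,\ldots,\agent_{i-1}\}$, so the numerator and the denominator of~\eqref{eq:MtoB_PerfectRecall} are precisely those of the conditional probability. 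Both expressions then equal $\prod_{\agent\in\AGENT^{\player}}\MtoB{\MixedStrategy}^{\player}_{\agent}\conditionaly{\history_{\agent}}{\history}$, which is the reduced claim, and hence the theorem.

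\textbf{Main obstacle.} I expect the delicate point to be exactly this last identification --- matching, for each $i$, the conditioning event $\defset{\wstrategy\in\WSTRATEGY^{\player}}{\wstrategy_{\agent_j}\np{\history}=\history_{\agent_j},\,\forall j<i}$ of the chain rule with the event $\defset{\wstrategy\in\WSTRATEGY^{\player}}{\wstrategy_{\range{\FirstElements{\kappa}}\cap\AGENT^{\player}}\np{\history}=\history_{\range{\FirstElements{\kappa}}\cap\AGENT^{\player}}}$ occurring in~\eqref{eq:MtoB_PerfectRecall} --- which rests on the collapse of the $\kappa$-existential and on $\range{\FirstElements{\kappa}}\cap\AGENT^{\player}$ being exactly the set of executive agents of~$\player$ that precede $\agent_i$ along $\history$. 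There is also the routine degenerate case in which some conditioning event is $\MixedStrategy^{\player}$-null: then $E_{\player}(\history)$ lies inside that null event, and, by the convention attached to~\eqref{eq:MtoB_PerfectRecall}, the corresponding behavioral kernel --- hence the telescoped product --- vanishes, so both quantities are zero and the identity still holds.
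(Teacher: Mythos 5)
Your proposal is correct and follows essentially the same route as the paper's proof: construct \( \ProductMixedStrategy^{\player} \) by composing Proposition~\ref{pr:MtoB_PerfectRecall} with Proposition~\ref{pr:BtoPM}, factor \( \QQ^{\omega}_{\MixedStrategy} \) across players, and identify \( \MixedStrategy^{\player}\bp{E_{\player}(\history)} \) with the product of behavioral kernels via the same telescoping chain-rule expansion along the ordering \( \ordering(\history) \), using the uniqueness of \( \kappa \) and the identification of \( \range{\FirstElements{\kappa}} \cap \AGENT^{\player} \) with the preceding executive agents. The paper's argument is the same, including the zero-denominator convention you mention.
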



\begin{subequations}
  \begin{proof} 
    The proof is in three steps. 
    \medskip

    \noindent$\bullet$
    First, as all the assumptions of
    Proposition~\ref{pr:MtoB_PerfectRecall} are satisfied,
    there exists a behavioral W-strategy
    \( \MtoB{\MixedStrategy}^{\player} = 
    \sequence{\MtoB{\MixedStrategy}^{\player}_{\agent}}{\agent \in \AGENT^{\player}} \), 
    as in Definition~\ref{de:behavioral_W-strategy},
    which satisfies~\eqref{eq:MtoB_PerfectRecall}.
    By Proposition~\ref{pr:BtoPM}, we define 
    the product-mixed W-strategy
    \( \ProductMixedStrategy^{\player}=
    \BtoPM{\MtoB{\MixedStrategy}}^{\player} \), that is,
    with the property~\eqref{eq:BtoPM} that, for any agent~\( \agent\in\AGENT^{\player} \), 
    \begin{equation}
      \ProductMixedStrategy^{\player}_{\agent}
      \defset{ \wstrategy_{\agent}\in\WSTRATEGY_{\agent}}%
      {\wstrategy_{\agent}\np{\history}=\control_{\agent}}
      = 
      \MtoB{\MixedStrategy}^{\player}_{\agent}
      \conditionaly{\control_{\agent}}{\history}
      \eqsepv \forall \control_{\agent}\in\CONTROL_{\agent} 
      \eqsepv \forall \history \in \HISTORY 
      \eqfinp
      \label{eq:Kuhn_proof_BtoPM}
    \end{equation}
    \medskip

    \noindent$\bullet$
    Second, we prove that\footnote{%
      See Footnote~\ref{ft:product-mixed_W-strategy}
      for the abuse of notation \( \ProductMixedStrategy^{\player}=
      \otimes_{\agent\in\AGENT^{\player}}\ProductMixedStrategy^{\player}_{\agent} \).}
    \begin{align}
      \MixedStrategy^{\player}
      & \defset{ \sequence{\wstrategy_{\agent}}{\agent\in\AGENT^{\player} }
        \in\WSTRATEGY^{\player} }%
        { \lambda_{\agent}\np{\history} =\history_{\agent}
        \eqsepv \forall \agent\in \AGENT^{\player} }
        \nonumber
      \\
      &=
          \ProductBehavioral^{\player}
          \defset{ \sequence{\wstrategy_{\agent}}{\agent\in\AGENT^{\player} }
          \in\WSTRATEGY^{\player} }%
          { \lambda_{\agent}\np{\history} =\history_{\agent}
          \eqsepv \forall \agent\in \AGENT^{\player} }
          \eqsepv \forall \history \in \HISTORY 
          \eqfinp  
        \label{eq:proof_Kuhn_mixedstrategy_ProductBehavioral}
    \end{align}
    In what follows, we consider a configuration~\( \history\in\HISTORY \)
    and the total ordering~\( \totalordering=\ordering\np{\history} \in
    \ORDER^{\numAGENT} \).
    We label the set~$\AGENT^{\player}$ of agents of the player~$\player$ 
    by the stage at which each of them plays as follows:
    \begin{equation}
      \AGENT^{\player}=\{ \totalordering(j_1) ,\ldots, \totalordering(j_{N}) \} 
      \text{  with } j_1 < \cdots < j_{N}
      \eqfinp  
      \label{eq:labelling}
    \end{equation}
    With this, we have\footnote{Using the notation $\ic{n}=\na{1,\ldots,n}$ to shorten some expressions.}
    \begin{align*}
      \MixedStrategy^{\player}
      &
        \defset{ \sequence{\wstrategy_{\agent}}{\agent\in\AGENT^{\player} }
        \in\WSTRATEGY^{\player} }%
        { \lambda_{\agent}\np{\history} =\history_{\agent}
        \eqsepv \forall \agent\in \AGENT^{\player} }
      \\
      &=
        \MixedStrategy^{\player}
        \defset{ \sequence{\wstrategy_{\totalordering(j_k)}}{k\in\ic{N}}
        \in \prod_{k=1}^{N} \WSTRATEGY_{\totalordering(j_k)} }%
        { \lambda_{\totalordering(j_k)}\np{\history} =\history_{\totalordering(j_k)}
        \eqsepv \forall k\in \ic{N}}
        \tag{as \( \AGENT^{\player}=\{ \totalordering(j_1) ,\ldots,
        \totalordering(j_{N}) \} \)
        by~\eqref{eq:labelling}}
      \\
      &=
        \prod_{n=1}^{N}
        \frac{
        \MixedStrategy^{\player}
        \defset{ \sequence{\wstrategy_{\totalordering(j_k)}}{k\in\ic{n}}
        \in {\displaystyle \mathop{\prod}_{k=1}^{n}} \WSTRATEGY_{\totalordering(j_k)} }%
        { \lambda_{\totalordering(j_k)}\np{\history} =\history_{\totalordering(j_k)}
        \eqsepv \forall k \in \ic{n}}
        }{%
        \MixedStrategy^{\player}
        \defset{ \sequence{\wstrategy_{\totalordering(j_k)}}{k\in\ic{n{-}1}}
        \in {\displaystyle \prod_{k=1}^{n-1}}\WSTRATEGY_{\totalordering(j_k)} }%
        { \lambda_{\totalordering(j_k)}\np{\history} =\history_{\totalordering(j_k)}
        \eqsepv \forall k\in\ic{n-1}}
        }
        \intertext{where, if the smaller term (the one to be found two equality lines above) is zero, every fraction
        is supposed to take the value zero, and, if the smaller term is positive, so
        are all the terms and no denominator is zero} 
      &=
        \prod_{n=1}^{N} 
        \MtoB{\MixedStrategy}^{\player}_{\totalordering(j_n)}%
        \conditionaly{\history_{\totalordering(j_n)}}{\history}
        \intertext{by~\eqref{eq:MtoB_PerfectRecall}, because 
        \( \range{\cut_{j_n}\np{\totalordering}} \cap \AGENT^{\player}
        =
        \{ \totalordering(j_n) \} \cup \bp{
        \range{\cut_{j_{n-1}}\np{\totalordering}} \cap \AGENT^{\player} } \) 
        by definition~\eqref{eq:cut} of the restriction mapping~$\cut$,
        and by definition of the sequence \( j_1 < \cdots < j_{N} \) 
        in~\eqref{eq:labelling}, which is such that 
        \( \AGENT^{\player}=\{ \totalordering(j_1) ,\ldots, \totalordering(j_{N}) \} \)} 
      &=
        \prod_{n=1}^{N} 
        \ProductBehavioral^{\player}_{\totalordering(j_n)}
        \defset{ \wstrategy_{\totalordering(j_n)} \in \WSTRATEGY_{\totalordering(j_n)} }%
        {\wstrategy_{\totalordering(j_n)} \np{\history}=\history_{\totalordering(j_n)} }
        \tag{by~\eqref{eq:Kuhn_proof_BtoPM}}
      \\
      &=
        \Bp{ \bigotimes_{n=1}^{N} 
        \ProductBehavioral^{\player}_{\totalordering(j_n)} }
        \defset{ \sequence{\wstrategy_{\totalordering(j_k)}}{k\in\ic{N}} 
        \in \prod_{k=1}^{N} \WSTRATEGY_{\totalordering(j_k)} }%
        { \lambda_{\totalordering(j_k)}\np{\history} =\history_{\totalordering(j_k)}
        \eqsepv \forall k\in\ic{N}}
        \tag{by definition of the product probability}
      \\
      &=
        \Bp{ \bigotimes_{n=1}^{N} 
        \ProductBehavioral^{\player}_{\totalordering(j_n)} }
        \defset{ \sequence{\wstrategy_{\agent}}{\agent\in\AGENT^{\player} }
        \in\WSTRATEGY^{\player} }%
        { \lambda_{\agent}\np{\history} =\history_{\agent} }
        \tag{as \( \AGENT^{\player}=\{ \totalordering(j_1) ,\ldots,
        \totalordering(j_{N}) \} \) in~\eqref{eq:labelling}}
    \end{align*}
    Thus, we have proved~\eqref{eq:proof_Kuhn_mixedstrategy_ProductBehavioral}.
    \medskip

    \noindent$\bullet$
    Third, for any configuration~\( \history=\bp{\omega, 
      \sequence{\control_\bgent}{\bgent\in\AGENT} } \in\HISTORY \), we have 

    \begin{align*}
      \QQ^{\omega}_{\MixedStrategy}
      \bp{\sequence{\control_\bgent}{\bgent\in\AGENT} }
      & =
        \bp{\bigotimes_{\player \in \PLAYER}\MixedStrategy^{\player}}
        \Bp{ \ReducedSolutionMap\np{\omega,\cdot}^{-1}
        \bp{\sequence{\control_\bgent}{\bgent\in\AGENT} } } 
        \tag{by definition~\eqref{eq:push_forward_probability_a} 
        of~\( \QQ^{\omega}_{\MixedStrategy} \) }
      \\
      &=
        \bp{ \bigotimes_{\player \in \PLAYER}\MixedStrategy^{\player} }
        \defset{\wstrategy\in\WSTRATEGY}{ \ReducedSolutionMap\np{\omega,\wstrategy} 
        = \sequence{\control_\bgent}{\bgent\in\AGENT} }
        \tag{by definition of a pushforward probability} 
      \\
      &=
        \bp{ \bigotimes_{\player \in \PLAYER}\MixedStrategy^{\player} }
        \defset{\wstrategy\in\WSTRATEGY}{ \lambda_{\agent}\bp{\omega, 
        \sequence{\control_\bgent}{\bgent\in\AGENT} } =\control_{\agent}
        \eqsepv \forall \agent\in \AGENT }
        \tag{by~\eqref{eq:push_forward_probability_b} 
        and~\eqref{eq:solution_map_IFF} which define
        the mapping~\( \ReducedSolutionMap\np{\omega,\cdot}\)} 
      \\
      &=
        \prod_{\player \in \PLAYER} \MixedStrategy^{\player}
\defset{ \sequence{\wstrategy_{\agent}}{\agent\in\AGENT^{\player} }
        \in\WSTRATEGY^{\player} }%
        { \lambda_{\agent}\bp{\omega, 
        \sequence{\control_\bgent}{\bgent\in\AGENT} } =\control_{\agent}
        \eqsepv \forall \agent\in \AGENT^{\player} } 
        \intertext{by definition of the product probability 
        \( \bigotimes_{\player \in \PLAYER}\MixedStrategy^{\player} \)
        on the product space
        \( \prod_{\player \in \PLAYER} \WSTRATEGY^{\player} \) }
      &=
        \prod_{\player \in \PLAYER} \MixedStrategy^{\player}
\defset{ \sequence{\wstrategy_{\agent}}{\agent\in\AGENT^{\player} }
        \in\WSTRATEGY^{\player} }%
        { \lambda_{\agent}\np{\history}=\history_{\agent}
        \eqsepv \forall \agent\in \AGENT^{\player} } 
        \intertext{(because \( \history=\bp{\omega, 
        \sequence{\control_\bgent}{\bgent\in\AGENT} } \) 
        and where we have used notation~\eqref{eq:history})}
      &=
        \prod_{\player' \neq \player} \MixedStrategy^{\player'}
\defset{ \sequence{\wstrategy_{\agent}}{\agent\in\AGENT^{\player'} }
        \in\WSTRATEGY^{\player'} }%
        { \lambda_{\agent}\np{\history} =\history_{\agent}
        \eqsepv  \forall \agent\in \AGENT^{\player'} } 
      \\
      &\phantom{==} \times
        \MixedStrategy^{\player}
\defset{ \sequence{\wstrategy_{\agent}}{\agent\in\AGENT^{\player} }
        \in\WSTRATEGY^{\player} }%
        { \lambda_{\agent}\np{\history} =\history_{\agent}
        \eqsepv  \forall \agent\in \AGENT^{\player} } 
        \tag{where we have singled out the player~$\player$}
      \\
      &=
        \prod_{\player' \neq \player} \MixedStrategy^{\player'}
\defset{ \sequence{\wstrategy_{\agent}}{\agent\in\AGENT^{\player'} }
        \in\WSTRATEGY^{\player'} }%
        { \lambda_{\agent}\np{\history} =\history_{\agent}
        \eqsepv  \forall \agent\in \AGENT^{\player'} } 
      \\
      &\phantom{==} \times
        \ProductBehavioral^{\player}
\defset{ \sequence{\wstrategy_{\agent}}{\agent\in\AGENT^{\player} }
        \in\WSTRATEGY^{\player} }%
        { \lambda_{\agent}\np{\history} =\history_{\agent}
        \eqsepv  \forall \agent\in \AGENT^{\player} } 
        \tag{by~\eqref{eq:proof_Kuhn_mixedstrategy_ProductBehavioral}}
      \\
      &=
        \QQ^{\omega}_{\couple{{\MixedStrategy}^{-\player}}{{\ProductMixedStrategy}^{\player}}}
        \bp{\sequence{\control_\bgent}{\bgent\in\AGENT} }
        \eqfinv
    \end{align*}
    by reverting to the top equality with 
    \( \MixedStrategy^{\player} \) replaced by 
    \( \ProductBehavioral^{\player} \). 
    \medskip

    This ends the proof.
  \end{proof}
\end{subequations}

\section{Discussion}

Most games in extensive form are formulated on a tree.
However, whereas trees are perfect to follow step by step how a game is played,
they can be delicate to manipulate when information sets are added
and must satisfy restrictive axioms to comply with the tree structure
\cite{Alos-Ferrer-Ritzberger:2016,Bonanno:2004,Brandenburger:2007}.
In this paper, we have introduced the notion of games in intrinsic form,
where the tree structure is replaced with a product structure,
more amenable to mathematical analysis. 
For this, we have adapted Witsenhausen's intrinsic model --- 
a model with Nature, agents and their decision sets, and
where information is represented by $\sigma$-fields --- to games.
In contrast to games in extensive form formulated on a tree,
Witsenhausen's intrinsic games (W-games) do not require
an explicit description of the play temporality. 
Not having a hardcoded temporal ordering makes mathematical representations more intrinsic.

As part of a larger research program, we have focused here on Kuhn's equivalence
theorem.
For this purpose, we have defined the property of perfect recall for a player 
of a causal W-game (that is, without referring to a tree structure),
and we have introduced three different definitions of
``randomized'' strategies in the W-games setting
--- mixed, product-mixed and behavioral. 
Then, we have shown that, under perfect recall for a player,
any of her possible mixed strategies can be replaced by a behavioral strategy,
which is the statement of Kuhn's equivalence theorem.
Moreover, we have shown that any of her possible mixed strategies can also be replaced by a
product-mixed strategy, that is, a mixed strategy under which her executive
agents are probabilistically independent.

We add to the existing literature on extensive games representation 
by proposing a representation that is more general than the tree-based ones as,
for instance, it allows to describe noncausal situations.
Indeed, Witsenhausen showed that there are noncausal W-models that yet are
solvable. 

Furthermore, our paper illustrates that the intrinsic form is well equipped to
handle proofs with mathematical formulas, without resorting to tree-based
  arguments that can be cumbersome when handling information. 
We hence believe that the intrinsic form constitutes a new valuable tool for the analysis of games 
with information. 

The current work is the first output of a larger research program 
that addresses games in intrinsic form.
We are currently working on the embedding of tree-based games in extensive
form into W-games (by a mapping that associates each information set with an
agent), and on the restricted class of W-games that can be 
embedded in tree-based games.
Futher research includes extensions to measurable decision sets, 
and to infinite number of agents or players.
We will also investigate what can be said about subgame perfect equilibria and
backward induction, as well as Bayesian games.
\medskip

\textbf{Acknowledgements}.

We thank Dietmar Berwanger and Tristan Tomala for
fruitful discussions, and for their valuable comments on a first version of this
paper. This research benefited from the support of the FMJH Program PGMO and
from the support to this program from EDF.

\appendix

\section{Background on fields, atoms and partitions}
\label{Background_on_fields_atoms_and_partitions}

\renewcommand{\atom}{\Set}

In this paper, we present the intrinsic model of Witsenhausen
\cite{Witsenhausen:1975,Carpentier-Chancelier-Cohen-DeLara:2015}
but with finite sets rather than with infinite ones 
as in the original exposition.
Witsenhausen used the language of $\sigma$-fields\footnote{%
Recall that a $\sigma$-field over the set~$\SET$ is a subset $\tribu{\Set} \subset 2^\SET$,
containing~$\SET$, and which is stable under complementation 
and countable union.}
to handle the concept of information in control theory.
Because sets are finite, 
we consider a restricted subclass of $\sigma$-fields.
In what follows, $\SET$ is a finite set.

An \emph{algebra}, or a \emph{field}, over the finite set~$\SET$ is
a subset $\tribu{\Set} \subset 2^\SET$,
containing~$\SET$, and which is stable under complementation and union.
The trivial field over the finite set~$\SET$ is the field 
\( \{ \emptyset, \SET \} \).
The \emph{complete field} over the finite set~$\SET$ is the field~$2^\SET$.

An \emph{atom} of the field~$\tribu{\Set}$ (over the finite set~$\SET$)
is a minimal element for the inclusion~$\subset$, that is, an atom 
is a nonempty subset $\atom \in \tribu{\Set}$ 
such that $K \in \tribu{\Set}$ and $K \subset \atom$ imply that $K = \emptyset$ or
$K=\atom$.
We denote by \( \crochet{\tribu{\Set}} \) the 
set of atoms of the field~$\tribu{\Set}$:
\begin{equation}
  \crochet{\tribu{\Set}} = \defset{ \atom \in \tribu{\Set}\backslash\{\emptyset\} }{%
    \bp{ K \in \tribu{\Set} \text{ and } K \subset \atom }
    \Rightarrow
    \bp{ K = \emptyset \text{ or } K=\atom } }
  \eqfinp
\label{eq:atom_set}
\end{equation}
For instance, a complete field has the singletons for atoms.
It can be shown that the atoms of~$\tribu{\Set}$ form a partition of $\SET$, that is,
they consist of mutually disjoint nonempty subsets whose union is $\SET$
\cite[Proposition~3.18]{Carpentier-Chancelier-Cohen-DeLara:2015}.
As a consequence, any element of the field~$\tribu{\Set}$ is necessarily written as 
the union of atoms that it contains, and we have the useful property
\begin{equation}
  \bp{ K \in \tribu{\Set} \text{ and } \atom \in \crochet{\tribu{\Set}} }
  \;\Rightarrow\;
  \bp{ K \cap \atom = \emptyset \text{ or } K \subset \atom }
  \eqfinp
\label{eq:atom_property}
\end{equation}

Consider two fields~$\tribu{\Set}$ and $\tribu{\Set}'$ over the finite set~$\SET$.
We say that the field~$\tribu{\Set}$ is \emph{finer} than the
field~$\tribu{\Set}'$
if \( \tribu{\Set} \supset \tribu{\Set}' \) (notice the reverse inclusion).
We also say that  $\tribu{\Set}'$ is a \emph{subfield} of~$\tribu{\Set}$.
As an illustration, the complete field is finer than any field or,
equivalently, any field is a subfield of the complete field.

The \emph{least upper bound} of two fields~$\tribu{\Set}$ and $\tribu{\Set}'$,
denoted by \( \tribu{\Set} \vee \tribu{\Set}' \),
is the smallest field that contains $\tribu{\Set}$ and $\tribu{\Set}'$.
The atoms of \( \tribu{\Set} \vee \tribu{\Set}' \) are all the nonempty intersections
between an atom of~$\tribu{\Set}$ and an atom of~$\tribu{\Set}'$.
The least upper bound of two fields is finer than any of the two.

Consider a field~$\tribu{\Set}$ over the finite set~$\SET$,
and a field~$\tribu{\Set}'$ over the finite set~$\SET'$.
The \emph{product field} \( \tribu{\Set} \otimes \tribu{\Set}' \)
is the smallest field, over the finite product set~$\SET\times\SET'$,
that contains all the rectangles, that is,
that contains all the products of an element of~$\tribu{\Set}$ with
an element of~$\tribu{\Set}'$. 

\newcommand{\noopsort}[1]{} \ifx\undefined\allcaps\def\allcaps#1{#1}\fi

\end{document}